\newcommand{\NN}{\mathbb{N}}
\newcommand{\QQ}{\mathbb{Q}}
\newcommand{\ZZ}{\mathbb{Z}}
\newcommand{\B}{\mathcal{B}}
\newcommand{\C}{\mathcal{C}}
\newcommand{\Pfr}{\mathfrak{P}}
\newcommand{\set}[1]{\left\{ #1 \right\}}
\newcommand{\setb}[1]{\left( #1 \right)}
\newcommand{\abs}[1]{\left| #1 \right|}
\newcommand{\step}[2]{\smallskip\noindent \hypertarget{#1}{\textbf{#2}}}
\newcommand{\ts}[1]{\textsuperscript{#1}}
\newtheorem{mymasterthm}{notForUse}
\theoremstyle{definition}
\newtheorem{myrem}[mymasterthm]{Remark}
\theoremstyle{plain}
\newtheorem{mylemma}[mymasterthm]{Lemma}
\newtheorem{mythm}[mymasterthm]{Theorem}
\newtheorem{myprop}[mymasterthm]{Proposition}
\newtheorem{myproblem}[mymasterthm]{Problem}
\newtheoremstyle{Step}{5pt}{\topsep}{\normalfont}{0pt}{\bfseries}{:}{5pt plus 1pt minus 1pt}{}
\newtheoremstyle{ReductionStep}{5pt}{\topsep}{\normalfont}{0pt}{\bfseries}{:}{5pt plus 1pt minus 1pt}{}
\theoremstyle{Step}
\newtheorem{mystep}{Step} 
\theoremstyle{ReductionStep}
\newcounter{countRedSteps}
\newtheorem{redstep}[countRedSteps]{Reduction Step}
\title[On the Diophantine equation $ U_n - b^m = c $]{On the Diophantine equation $ U_n - b^m = c $}
\subjclass[2020]{11Y50, 11D61, 11B37, 11J86}
\keywords{Linear recurrence sequence, Diophantine equation, Pillai problem, LLL-algorithm}
\author[S. Heintze]{Sebastian Heintze}
\address{Sebastian Heintze\newline
	\indent Graz University of Technology\newline
	\indent Institute of Analysis and Number Theory\newline
	\indent Steyrergasse 30/II \newline
	\indent A-8010 Graz, Austria}
\email{heintze@math.tugraz.at}
\author[R.F. Tichy]{Robert F. Tichy}
\address{Robert F. Tichy\newline
	\indent Graz University of Technology\newline
	\indent Institute of Analysis and Number Theory\newline
	\indent Steyrergasse 30/II \newline
	\indent A-8010 Graz, Austria}
\email{tichy@tugraz.at}
\author[I. Vukusic]{Ingrid Vukusic}
\address{Ingrid Vukusic\newline
	\indent University of Salzburg\newline
	\indent Department of Mathematics\newline
	\indent Hellbrunnerstr. 34 \newline
	\indent A-5020 Salzburg, Austria}
\email{ingrid.vukusic@plus.ac.at}
\author[V. Ziegler]{Volker Ziegler}
\address{Volker Ziegler\newline
	\indent University of Salzburg\newline
	\indent Department of Mathematics\newline
	\indent Hellbrunnerstr. 34 \newline
	\indent A-5020 Salzburg, Austria}
\email{volker.ziegler@plus.ac.at}
\thanks{This work was supported by the Austrian Science Fund (FWF) under the project I4406.}
\begin{document}

	\begin{abstract}
 Let $(U_n)_{n\in \NN}$ be a fixed linear recurrence sequence defined over the integers (with some technical restrictions). We prove that there exist effectively computable constants $B$ and $N_0$ such that for any $b,c\in \ZZ$ with $b> B$ the equation $U_n - b^m = c$ has at most two distinct solutions $(n,m)\in \NN^2$ with $n\geq N_0$ and $m\geq 1$. Moreover, we apply our result to the special case of Tribonacci numbers given by $T_1= T_2=1$, $T_3=2$ and $T_{n}=T_{n-1}+T_{n-2}+T_{n-3}$ for $n\geq 4$. By means of the LLL-algorithm and continued fraction reduction we are able to prove $N_0=1.1\cdot 10^{37}$ and $B=e^{438}$. The corresponding reduction algorithm is implemented in Sage.
	\end{abstract}
	
	\maketitle
	

	\section{Introduction}
	
In the last couple of years investigating Pillai-type problems with linear recurrence sequences has been very popular (see Table \ref{table:refs}). 
\begin{table}[h]
\caption{Overview of results on $U_n-V_m=c$}\label{table:refs}
\begin{tabular}{p{0.3\textwidth} p{0.3\textwidth} p{0.3\textwidth}}
\hline
$U_n$                                       & $V_m$                      & authors                                                       \\ \hline
Fibonacci numbers                           & powers of 2                & Ddamulira, Luca, Rakotomalala \cite{DdamuliraLucaRakotomalala2017} \\
Fibonacci numbers                           & Tribonacci numbers         & Chim, Pink, Ziegler \cite{ChimPinkZiegler2017}                     \\
Tribonacci numbers                          & powers of 2                & Bravo, Luca, Yaz\'{a}n \cite{BravoLucaYazan2017}                    \\
$k$-Fibonacci number                        & powers of 2                & Ddamulira, G\'{o}mez, Luca \cite{DdamuliraGomezCarlosLuca2018}     \\
Pell numbers								& powers of 2                & Hernane, Luca, Rihane, Togb\'{e} \cite{HernaneLucaRihaneTogbe2018} \\
Tribonacci numbers                          & powers of 3                & Ddamulira \cite{Ddamulira2019Tribos}                               \\
Fibonacci numbers                           & Pell numbers               & Hern\'{a}ndez, Luca, Rivera \cite{HernandezLucaRivera2019}          \\
Padovan numbers	 							& powers of 2                & Lomel\'{\i}, Hern\'{a}ndez \cite{LomeliHernandez2019} \\
Padovan numbers								& powers of 3                & Ddamulira \cite{Ddamulira2019}                                     \\
Padovan numbers                             & Tribonacci numbers         & Lomel\'{\i}, Hern\'{a}ndez, Luca \cite{LomeliHernandezLuca2019Indian} \\
Fibonacci numbers							& Padovan numbers		     & Lomel\'{\i}, Hern\'{a}ndez, Luca \cite{LomeliHernandezLuca2019} \\
Fibonacci numbers							& powers of 3                & Ddamulira \cite{Ddamulira2020} \\
$k$-Fibonacci numbers                       & powers of 3                & Ddamulira, Luca \cite{DdamuliraLuca2020}                           \\
$X$-coordinates of Pell equations           & powers of 2                & Erazo, G\'{o}mez, Luca \cite{ErazoGomezLuca2021}                    \\
$k$-Fibonacci numbers                       & Pell numbers               & Bravo, D\'{\i}az, G\'{o}mez \cite{BravoDiazGomez2021}               \\ \hline
\end{tabular}
\end{table}
This trend was started in 2017 by Ddamulira, Luca, Rakotomalala \cite{DdamuliraLucaRakotomalala2017}, who proved that the only integers $c$ having at least two representations of the form $F_n - 2^m$ are $c\in \{0,1,-1,-3,5,-11, -30,85\}$ (here $F_n$ is the $n$-th Fibonacci number). 
This problem was inspired by a result due to S. S. Pillai. In 1936 Pillai \cite{Pillai1936, Pillai1937} proved that if $a$ and $b$ are coprime integers, then there exists a constant $c_0(a,b)$ depending on $a$ and $b$ such that for any $c>c_0(a,b)$ the equation 
\begin{equation}\label{eq:Pillai}
	a^n-b^m=c
\end{equation}
has at most one solution $(n,m)\in \ZZ_{>0}^2$.   
A natural generalisation of this problem is to replace $a^n$ and $b^m$ by other linear recurrence sequences. This is what the authors in \cite{DdamuliraLucaRakotomalala2017} did, and also what all the other authors in Table~\ref{table:refs} have done. All these results use lower bounds for linear forms in logarithms and reduction methods. Moreover, there exists a general result:
Chim, Pink and Ziegler \cite{ChimPinkZiegler2018} proved that for two fixed linear recurrence sequences $(U_n)_{n\in \NN}$, $(V_n)_{n\in \NN}$ (with some restrictions) the equation
\begin{equation*}
	U_n - V_m = c
\end{equation*}
has at most one solution $(n,m)\in \ZZ_{>0}^2$ for all $c\in \ZZ$, 
except if $c$ is in a finite and effectively computable set $\C \subset \ZZ$ that depends on $(U_n)_{n\in \NN}$ and $(V_n)_{n\in \NN}$.

In this paper, we would like to generalize that result by ``unfixing'' one of the linear recurrence sequences. 
In the classical setting, it is possible to ``unfix'' $a$ and $b$ completely: Bennett \cite{Bennett2001} proved that for any integers $a,b\geq 2$ and $c\geq 1$ Equation~\eqref{eq:Pillai} has at most two solutions $(n,m)\in \ZZ_{>0}^2$.
Moreover, he conjectured that in fact the equation has at most one solution $(n,m)$ for all but 11 specific exceptional triples $(a,b,c)$.

Of course, we cannot simply say that $(U_n)_{n\in \NN}$ and $(V_n)_{n\in \NN}$ should be completely arbitrary. However, there already exist results where the linear recurrence sequence $(U_n)_{n\in \NN}$ is not entirely fixed: In Table~\ref{table:refs} there are some results involving $k$-Fibonacci numbers \cite{DdamuliraLuca2020, BravoDiazGomez2021}, where $k$ is variable. 
Now what we will do is fix $(U_n)_{n\in \NN}$ and let $V_m=b^m$ with variable $b$. Our main result will be that for fixed $(U_n)_{n\in \NN}$ (with some restrictions) the equation 
\[
	U_n - b^m = c
\]
has at most two distinct solutions $(n,m) \in \ZZ_{>0}^2$ for any $(b,c)\in \ZZ^2$ with only finitely many exceptions $b \in \B$, where $\B$ is an effectively computable set. Allowing two solutions (instead of one solution) is the price we have to pay for letting $b$ vary. The second solution is needed for technical reasons, but we believe that the result might also be true if we only allow at most one solution.
Finally, note that our method does not enable us to solve the problem for a specific sequence $(U_n)_{n\in \NN}$ completely. We will show how far we can get by computing the effective bounds for the Tribonacci numbers and reducing the bounds as far as possible.

Let us outline the rest of this paper. The next section contains some notations  and our results: Theorem~\ref{thm:mainthm} is the main theorem, Theorem~\ref{thm:Tribos} shows what happens if we apply our methods to the Tribonacci numbers. Moreover, we make several remarks on the assumptions in Theorem~\ref{thm:mainthm} and pose some open problems regarding Theorem~\ref{thm:Tribos}. Section~\ref{sec:diophApprox} is a collection of rather well known results from Diophantine approximation. 
Section~\ref{sec:proofMainThm} is devoted to the proof of Theorem~\ref{thm:mainthm} and Section~\ref{sec:Tribos} is devoted to the proof of Theorem~\ref{thm:Tribos}.
Beforehand, in Section~\ref{sec:overviewProof}, we give an overview of the two proofs. In particular, we point out the parallels and differences between the two proofs.

	\section{Notation and results}
	
	A linear recurrence sequence $ (U_n)_{n \in \NN} $ is given by finitely many initial values together with a recursive formula of the shape
	\begin{equation*}
		U_{n+\ell} = w_{\ell-1} U_{n+\ell-1} + \cdots + w_0 U_n.
	\end{equation*}
	We say that such a recurrence sequence is defined over the integers if the coefficients $ w_0, \ldots, w_{\ell-1} $ as well as the initial values are all integers.
	In this situation all elements of the sequence are integers.
	It is well known that any such linear recurrence sequence can be written in its Binet representation
	\begin{equation*}
		U_n = a_1(n) \alpha_1^n + \cdots + a_k(n) \alpha_k^n,
	\end{equation*}
	where the characteristic roots $ \alpha_1, \ldots, \alpha_k $ are algebraic integers and the coefficients $ a_1(n), \ldots, a_k(n) $ are polynomials in $ n $ with coefficients in $ \QQ(\alpha_1, \ldots, \alpha_k) $.
	The recurrence sequence is called simple if $ a_1(n), \ldots, a_k(n) $ are all constant, i.e.\ independent of $ n $.
	Moreover, $ \alpha_1 $ is called the dominant root if $ \abs{\alpha_1} > \abs{\alpha_i} $ for all $ i = 2, \ldots, k $.
	Our result is now the following theorem:
	
	\begin{mythm}
		\label{thm:mainthm}
		Let $ (U_n)_{n \in \NN} $ be a simple linear recurrence sequence defined over the integers with Binet representation
		\begin{equation*}
			U_n = a \alpha^n + a_2 \alpha_2^n + \cdots + a_k \alpha_k^n
		\end{equation*}
		and irrational dominant root $ \alpha > 1 $.
		Assume further that $ a > 0 $, that $ a $ and $ \alpha $ are multiplicatively independent, and that the equation
		\begin{equation}
			\label{eq:techcond}
			\alpha^z - 1 = a^x \alpha^y
		\end{equation}
		has no solutions with $ z \in \NN $, $ x,y \in \QQ $ and $ -1 < x < 0 $.
		Then there exist effectively computable constants $ B \geq 2 $ and $ N_0 \geq 2 $ such that the equation
		\begin{equation}
			\label{eq:centraleq}
			U_n - b^m = c
		\end{equation}
		has for any integer $ b > B $ and any $ c \in \ZZ $ at most two distinct solutions $ (n,m) \in \NN^2 $ with $ n \geq N_0 $ and $ m \geq 1 $.
	\end{mythm}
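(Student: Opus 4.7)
The plan is a proof by contradiction driven by Baker's theory of linear forms in logarithms. Assume for contradiction that for some $b > B$ and $c \in \ZZ$ equation~\eqref{eq:centraleq} admits three distinct solutions $(n_1,m_1), (n_2,m_2), (n_3,m_3)$ with $n_i \geq N_0$ and $m_i \geq 1$. After relabelling, $n_1 < n_2 < n_3$, and since $U_n \sim a\alpha^n$ is strictly increasing for large $n$, we also have $m_1 < m_2 < m_3$. A routine argument using $b^{m_i} = U_{n_i} - c$ together with the Binet expansion $U_n = a\alpha^n + \sum_{j \geq 2} a_j \alpha_j^n$ (where $\max_{j \geq 2}|\alpha_j| < \alpha$) shows that $m_i \log b$ and $n_i \log \alpha$ agree up to a bounded error, so $m_i$ and $n_i$ are of comparable size in logarithmic scale.

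The next step is to eliminate $c$ by subtraction. From $U_{n_j} - U_{n_i} = b^{m_j} - b^{m_i}$ for $(i,j) = (1,2)$ and $(1,3)$, substituting the Binet expansion and dividing by the dominant terms produces two linear forms in the logarithms $\log\alpha$, $\log b$, $\log a$ of the shape
\begin{equation*}
\Lambda_\ell := (m_{\ell+1} - m_1)\log b - (n_{\ell+1} - n_1)\log\alpha - \log \frac{a(1 - \alpha^{n_1-n_{\ell+1}})}{1 - b^{m_1-m_{\ell+1}}}, \quad \ell = 1,2,
\end{equation*}
each bounded above in absolute value by a constant times $(|\alpha_2|/\alpha)^{n_1}$ from the neglected subdominant terms of the Binet expansion. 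Matveev's theorem (recalled in Section~\ref{sec:diophApprox}) supplies the matching lower bound $|\Lambda_\ell| \geq \exp(-c_1 \log b \cdot \log n_3)$ with an effective $c_1$ depending only on $(U_n)_{n\in \NN}$, yielding a polynomial bound on $n_1$ in terms of $\log b$ and $\log n_3$.

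The presence of \emph{three} solutions is exploited by combining $\Lambda_1$ and $\Lambda_2$ so as to eliminate $\log b$: the combination $(m_3-m_1)\Lambda_1 - (m_2-m_1)\Lambda_2$ is a linear form in $\log\alpha$ and $\log a$ only, plus error terms that are exponentially small in $n_1$ and in $\log b$. After exponentiation and cleaning up, this relation takes the shape
\begin{equation*}
\alpha^{z} - 1 = a^{x}\alpha^{y} \cdot (1 + \eps), \qquad z \in \NN,\ x,y \in \QQ,\ -1 < x < 0,
\end{equation*}
where $z$, $x$, $y$ are determined by the index data $(n_i,m_i)$ and $\eps$ is small. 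Hypothesis~\eqref{eq:techcond} forbids the exact relation $\alpha^z - 1 = a^x \alpha^y$ in this range, and multiplicative independence of $a$ and $\alpha$ rules out the remaining degenerate subcase, so the left-hand side is effectively bounded away from zero. A final application of Matveev to this derived form produces a bound on $n_1$ that is independent of $b$; choosing $B$ and $N_0$ large enough produces the desired contradiction.

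The main obstacle I anticipate is the joint handling of the degeneracies: the integer coefficient $m_2 n_3 - m_3 n_2$ of $\log\alpha$ in the eliminating combination could vanish, forcing a separate argument based directly on the index data and the multiplicative independence of $a$ and $\alpha$; and the ``constant'' term in each $\Lambda_\ell$ still involves $b$ through $b^{m_1 - m_{\ell+1}}$, so one must verify that these contributions can be absorbed uniformly for $b > B$. Since every step is explicit and every appeal to Matveev delivers effective constants, both $N_0$ and $B$ are effectively computable.
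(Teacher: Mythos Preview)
Your outline follows the right general philosophy, but two steps do not go through as written.

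First, your single Matveev application to $\Lambda_\ell$ cannot give the lower bound $\exp(-c_1\log b\cdot\log n_3)$ with $c_1$ depending only on the sequence. The third algebraic number in $\Lambda_\ell$ is $\eta_\ell=\frac{a(1-\alpha^{n_1-n_{\ell+1}})}{1-b^{m_1-m_{\ell+1}}}$, whose height is of order $(n_{\ell+1}-n_1)h(\alpha)+(m_{\ell+1}-m_1)\log b$; without a prior bound on these differences the Matveev constant blows up and the comparison with the upper bound becomes circular. The paper handles this by a cascade: one first applies Matveev to the \emph{simpler} form $a\alpha^{n_1}b^{-m_1}-1$ (heights independent of the solution) to bound $\min\{n_1-n_2,(m_1-m_2)\log b\}\ll\log n_1\log b$, then to forms containing only one of $\alpha^{n_1-n_2}-1$ or $b^{m_1-m_2}-1$ to bound the max of these, and only then to your $\Lambda_\ell$, obtaining $n_1\ll(\log n_1\log b)^3$. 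You cannot collapse this into a single step.

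Second, your elimination of $\log b$ and the invocation of the technical condition~\eqref{eq:techcond} are not correct as described. The combination $(m_3-m_1)\Lambda_1-(m_2-m_1)\Lambda_2$ leaves behind a form in $\log\alpha$, $\log a$, $\log(1-\alpha^{n_1-n_2})$, and $\log(1-\alpha^{n_1-n_3})$ (plus the $b$-dependent error terms); it does not exponentiate to anything of the shape $\alpha^z-1=a^x\alpha^y(1+\eps)$. In the paper the elimination is done differently and in two stages: one combines the \emph{simple} forms $\Lambda_{ij}=n_i\log\alpha-m_i\log b+\log a$ to get a two-logarithm form in $\log\alpha,\log a$ only, whose nonvanishing follows from multiplicative independence; this gives $n_2-n_3\ll\log n_1$. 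Then one combines the simple $\Lambda_{12}$ with the refined $\widetilde{\Lambda_{23}}=n_3\log\alpha-m_2\log b+\log(a(\alpha^{n_2-n_3}-1))$ (now of controlled height), obtaining a three-logarithm form in $\log\alpha,\log a,\log(\alpha^{n_2-n_3}-1)$. It is \emph{here}, and only here, that condition~\eqref{eq:techcond} enters: if this form vanished, one would deduce $\alpha^{n_2-n_3}-1=a^x\alpha^y$ with $x=m_2/m_1-1\in(-1,0)$, contradicting the hypothesis. This yields $\log b\ll(\log n_1)^2$, which combined with the earlier $n_1\ll(\log n_1\log b)^3$ finishes the proof.
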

	
	Let us give some remarks regarding the technical condition involving Equation~\eqref{eq:techcond} in the above theorem:
	
	\begin{myrem}
		The technical condition containing Equation \eqref{eq:techcond}
		can be effectively checked for any given recurrence sequence $ (U_n)_{n \in \NN} $:
		
		First note that by construction $ \alpha $ is an algebraic integer.
		Moreover, note that the ideals $ (\alpha) $ and $ (\alpha^z-1) $ with $ z \in \NN $ have no common prime ideals in their factorisations.
		
		Let $ \Pfr_1, \ldots, \Pfr_n $ be the prime ideals that appear in the prime ideal factorisation of $ (a) $. 
		If $ \Pfr_i $ is not a prime factor of $ (\alpha) $, then let $ k_i $ be the order of $ \alpha $ modulo $ \Pfr_i $, i.e.\ $ k_i $ is minimal such that $ \Pfr_i $ is a prime factor of $ (\alpha^{k_i}-1) $.
		Note that if $ \Pfr_i $ lies above $ (p_i) $ and $ f_i $ is the inertia degree, then $ k_i \mid p_i^{f_i}-1 $, so the $k_i$ are bounded.
		Thus we can compute the maximum of all these orders $ k_0 := \max k_i $.
		
		By Schinzel's theorem on primitive divisors \cite{Schinzel1974}, there exists an effectively computable number $ n_0 $ such that $ \alpha^z - 1 $ has a primitive divisor for any $ z > n_0 $.
		This means that for $ z > \max \set{k_0, n_0} $ the ideal $ (\alpha^z-1) $ has a primitive divisor which is not a divisor of $ (a) $.
		Since $ (\alpha) $ and $ (\alpha^z-1) $ have no common divisors, it is impossible that $ \alpha^z - 1 = a^x \alpha^y $ for $ z > \max \set{k_0,n_0} $. 
		
		For each $ z = 1, \ldots, \max \set{k_0,n_0} $ one can check whether $ \alpha^z - 1 = a^x \alpha^y $ has a solution with $ x,y \in \QQ $ and $ -1 < x < 0 $ by looking at the primes of $ \alpha^z-1 $, $ a $ and $ \alpha $.
	\end{myrem}
	
	\begin{myrem}
		\label{rem:easierCond}
		Let $ \Pfr_1, \ldots, \Pfr_n $ be all prime ideals that appear in the prime ideal factorisations of $ (a) $ and $ (\alpha) $.
		Then we can write
		\begin{align*}
			(a) &= \Pfr_1^{a_1} \cdots \Pfr_n^{a_n}, \\
			(\alpha) &= \Pfr_1^{b_1} \cdots \Pfr_n^{b_n},
		\end{align*}
		where the $ a_i $ and $ b_i $ are integers.
		The following two conditions are relatively easy to check and each of them implies the technical condition containing Equation \eqref{eq:techcond}:
		\begin{enumerate}[I)]
			\item
			\label{it:condDet}
			There are $ i,j \in \set{1,\ldots,n} $ such that
			\begin{equation*}
				\det
				\begin{pmatrix}
					a_i & b_i \\
					a_j & b_j 
				\end{pmatrix}
				= \pm 1.
			\end{equation*}
			\item
			\label{it:condUnit}
			$ \alpha $ is a unit and there is an index $ i \in \set{1,\ldots,n} $ with $ a_i = \pm 1 $.
		\end{enumerate}
	\end{myrem}
	
	\begin{proof}
		If \eqref{eq:techcond} is satisfied, then the factorisation of $ (\alpha^z-1) $ contains also only the prime ideals $ \Pfr_1, \ldots, \Pfr_n $ and we can write
		\begin{equation*}
			(\alpha^z-1) = \Pfr_1^{z_1} \cdots \Pfr_n^{z_n}
			= (\Pfr_1^{a_1} \cdots \Pfr_n^{a_n})^x (\Pfr_1^{b_1} \cdots \Pfr_n^{b_n})^y,
		\end{equation*}
		which implies 
		\begin{equation*}
			z_i = a_i x + b_i y
		\end{equation*}
		for $ i = 1, \ldots, n $.
		Note that all the $ z_i, a_i, b_i $ are integers.
		Therefore if \ref{it:condDet}) is satisfied, then it follows that $ x $ and $ y $ are integers as well and in particular we do not have $ -1 < x < 0 $.
		If \ref{it:condUnit}) is satisfied, then $ b_1 = \cdots = b_n = 0 $, so $ a_i = \pm 1 $ implies that $ x $ is an integer and again we do not have $ -1 < x < 0 $.
	\end{proof}
	
	\begin{myrem}
		Theorem \ref{thm:mainthm} can be applied to the Fibonacci numbers.
		Here we have $ a = \frac{1}{\sqrt{5}} $ and $ \alpha = \frac{1+\sqrt{5}}{2} $, i.e.\ $ \alpha $ is a unit and $ (a) = (\sqrt{5})^{-1} $. Thus condition \ref{it:condUnit}) in Remark \ref{rem:easierCond} is satisfied.
	\end{myrem}
	
	\begin{myrem}
		Theorem \ref{thm:mainthm} can be applied to the linear recurrence sequence given by $ U_0 = 0 $, $ U_1 = 1 $ and $ U_{n+2} = U_{n+1} + 3 U_n $ for $ n \geq 0 $.
		Here we have $ a = \frac{1}{\sqrt{13}} $ and $ \alpha = \frac{1+\sqrt{13}}{2} $, i.e.\ $ (\alpha) = \left( \frac{1+\sqrt{13}}{2} \right)^1 $ is prime (it lies over $ p=3 $) and $ (a) = (\sqrt{13})^{-1} $. Thus condition \ref{it:condDet}) in Remark \ref{rem:easierCond} is satisfied.
	\end{myrem}
	
	\begin{myrem}
		If we weaken Theorem \ref{thm:mainthm} in the sense that we prove the existence of at most three solutions, then an inspection of the proof shows that the technical condition containing Equation \eqref{eq:techcond} is not needed any more.
		
		Furthermore, it is not clear, whether all assumptions in Theorem \ref{thm:mainthm} are really necessary for the statement to be true or only required for our proof to work.
	\end{myrem}
	
As a special case of Theorem \ref{thm:mainthm} we get the following result for the Tribonacci numbers, where the technical condition is checked directly in the proof (Section \ref{sec:Tribos}). Note that the case of Fibonacci numbers has recently been considered by Batte et al.~\cite{BatteEtAl2022}.

\begin{mythm}\label{thm:Tribos}
Let $(T_n)_{n\in \NN}$ be the Tribonacci sequence given by  $T_1=1, T_2=1, T_3=2$ and $T_{n}=T_{n-1}+T_{n-2}+T_{n-3}$ for $n\geq 4$.
If for some integers $b\geq 2$ and $c$ the equation $T_n-b^m=c$ has at least three solutions in positive integers $n,m$ given by $(n_1,m_1),(n_2,m_2),(n_3,m_3)$ with $n_1>n_2>n_3\geq 2$, then 
\[
	\log b \leq 438
	\quad \text{and} \quad 
	150<n_1\leq 1.1\cdot 10^{37}.
\]
\end{mythm}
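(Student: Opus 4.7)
The plan is to specialize the proof of Theorem \ref{thm:mainthm} to the Tribonacci setting and then sharpen the resulting effective bounds via lattice reduction. Suppose we have three solutions $(n_1,m_1)$, $(n_2,m_2)$, $(n_3,m_3)$ with $n_1>n_2>n_3\geq 2$. Using the Binet representation $T_n=a\alpha^n+a_2\beta^n+a_3\bar\beta^n$, where $\alpha\approx 1.839$ is the dominant real root and $\abs{\beta}=\abs{\bar\beta}<1$, the three equations $T_{n_i}-b^{m_i}=c$ can be combined in pairs (eliminating $c$) to yield relations of the shape $a\alpha^{n_i}-b^{m_i}=(\text{exponentially small error in }n_3)$, forcing $b^{m_i}$ to lie extremely close to $a\alpha^{n_i}$ for the indices with large $n_i$. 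I would then set up the linear forms in three logarithms $\Lambda_i=m_i\log b-n_i\log\alpha-\log a$, bound them above by the smallness estimates just obtained, and bound them below via Matveev's theorem; playing the resulting inequalities off against each other yields an explicit bound for $n_1$ and $m_1$ in terms of $\log b$, and then a bound for $\log b$ itself.

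Before invoking this machinery one must verify that the Tribonacci sequence satisfies all the hypotheses of Theorem \ref{thm:mainthm}. The sequence is simple (three distinct characteristic roots), its dominant root $\alpha>1$ is irrational, and the Binet coefficient $a=\alpha/((\alpha-\beta)(\alpha-\bar\beta))$ is real positive. Since the constant term of $x^3-x^2-x-1$ equals $-1$, $\alpha$ is a unit in $\ZZ[\alpha]$, so condition \ref{it:condUnit}) of Remark \ref{rem:easierCond} applies as soon as the prime ideal factorisation of $(a)$ in the ring of integers of $\QQ(\alpha)$ contains some prime with exponent $\pm 1$; this is a finite algebraic computation. Multiplicative independence of $a$ and $\alpha$ follows from comparing absolute norms. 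These verifications discharge the technical condition involving Equation~\eqref{eq:techcond}.

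The general machinery gives a very large initial upper bound on $n_1$, of the rough order $10^{70}$. To reduce this to $1.1\cdot 10^{37}$ I would iterate an LLL-based reduction on a lattice built from rational approximations of $\log b$, $\log\alpha$, $\log a$, feeding each improved bound back into the linear-forms inequality until the procedure stabilises. An additional difficulty is that $b$ is variable rather than fixed, so one must rigorously cover every integer $b$ in the range $2\leq b\leq e^{438}$, either by running a controlled family of lattices indexed by $b$ or by combining the LLL step with continued fraction expansions of $\log\alpha/\log b$. The bound $\log b\leq 438$ emerges from a separate linear-forms-in-logarithms argument treating $\log b$ itself as one of the unknowns. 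Finally, the lower bound $n_1>150$ comes from a direct computer search confirming that for $n_1\leq 150$ no value of $c$ admits three representations $T_n-b^m=c$.

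The principal obstacle is exactly the variability of $b$: running a single LLL reduction for a fixed recurrence is routine, but orchestrating it uniformly over an enormous range of $b$, keeping every constant effective throughout, and certifying correctness in Sage is the technical heart of the Tribonacci application. The role of the assumption $n_3\geq 2$ in the hypothesis (instead of an initial index $1$) is to ensure the subtraction step produces an error that is genuinely $o(1)$ rather than of constant size, which keeps the linear-forms estimates sharp enough for the reduction to succeed.
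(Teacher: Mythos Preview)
Your outline captures the broad shape of the argument but misses the central idea, and the reduction plan as written is infeasible.

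The key step you do not describe is how to bound $\log b$ in the first place. In the paper, one takes the linear forms $\Lambda_{12}'=\log a+n_1\log\alpha-m_1\log b$ and $\Lambda_{23}'=\log a+n_2\log\alpha-m_2\log b$ (each small by the usual estimate), and forms the combination $m_2\Lambda_{12}'-m_1\Lambda_{23}'$. The point is that $\log b$ \emph{cancels}: the resulting form $\Lambda_3'$ involves only $\log\alpha$ and $\log a$. Matveev (or better, Laurent for two logs) now gives a lower bound with no $\log b$ on the right, and comparison with the upper bound forces $\min(n_1-n_2,\ n_2-n_3,\ (m_2-m_3)\log b)\ll(\log n_1)^2$. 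A second elimination, combining $\Lambda_{12}'$ with a form built from $\alpha^{n_2-n_3}-1$, gives $\Lambda_4'$ in $\log a$, $\log\alpha$, $\log(\alpha^{n_2-n_3}-1)$, still with no $\log b$. These two $b$-free forms eventually bound $n_1-n_2$, and then $\log b\leq(m_1-m_2)\log b\leq(n_1-n_2)\log\alpha+O(1)$ gives the bound on $\log b$. Your phrase ``a separate linear-forms-in-logarithms argument treating $\log b$ itself as one of the unknowns'' does not convey this elimination, and without it the method does not produce a bound on $b$ at all.

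This same omission is what makes your reduction plan unworkable. You propose LLL on a lattice involving $\log b$, $\log\alpha$, $\log a$ and then covering ``every integer $b$ in the range $2\leq b\leq e^{438}$''. That is roughly $10^{190}$ values of $b$; no loop over $b$ is possible. The paper's reduction uses \emph{only} the $b$-free forms $\Lambda_3'$ (continued fractions, two logarithms) and $\Lambda_4'$ (LLL, three logarithms). The latter depends on the integer parameter $n_2-n_3$, which by the previous step is bounded by a few hundred, so one runs LLL over at most $\sim 600$ lattices---not over all $b$. The forms that contain $\log b$ are never reduced; this is precisely why the final bound on $n_1$ cannot be pushed below $10^{37}$ and why Problem~\ref{problem:Tribos-complete} remains open.

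Two smaller points. Your verification of the technical condition via Remark~\ref{rem:easierCond}\,\ref{it:condUnit}) is not quite right: condition~\ref{it:condUnit}) needs a prime ideal in $(a)$ with exponent $\pm 1$, which you leave as ``a finite algebraic computation'' without doing it. The paper instead argues directly that if $\alpha^z-1=a^x\alpha^y$ then $N_{K/\QQ}(\alpha^z-1)=44^{-x}$ must be a rational integer, impossible for $-1<x<0$. And the restriction $n_3\geq 2$ is imposed simply because $T_1=T_2=1$, not for the analytic reason you suggest.
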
	

\begin{myrem}
The assumption $n_1>n_2>n_3\geq 2$ in the above theorem is natural because of $T_1=T_2$.
\end{myrem}

In view of this result, the following question remains:

\begin{myproblem}\label{problem:Tribos-complete}
Do there exist any pairs $(b,c)$ such that the equation $T_n-b^m=c$ has at least three solutions? 
\end{myproblem}

Moreover, in the proof of Theorem \ref{thm:Tribos} (Section \ref{sec:Tribos}, \hyperlink{step:smallSols}{``small solutions''}) we will search for $b$'s and $c$'s such that $T_n-b^m=c$ has at least two small solutions. We will only find two solutions for
\begin{equation}
	\label{eq:bc}
	\begin{split}
		(b,c) \in	
		\{&(2, -8), (2, -3), (2, -1), (2, 0), (2, 5), (3, -2), (3, 4), (5, -121), \\
		&(5, -1), (5, 19), (7, -5), (17, -15), (54, 220), (641, -137)\}.
	\end{split}
\end{equation}
Thus the following question remains:

\begin{myproblem}
Except for the pairs from \eqref{eq:bc}, do there exist any further $(b,c)$ such that $T_n-b^m=c$ has at least two solutions?
\end{myproblem}

\begin{myrem}
In the proof of Theorem \ref{thm:Tribos} (Section \ref{sec:Tribos}, \hyperlink{step:smallSols}{``small solutions''}) we search for all $2\leq n_2 < n_1 \leq 150$ such that the difference of the corresponding Tribonacci numbers can be written in the form $T_{n_1}-T_{n_2}=b^{m_1}-b^{m_2}$ with $b\geq 2$ and $m_1>m_2\geq 1$. 
The bound 150 is chosen because the computations only take a few minutes and the proof of the upper bound in Theorem \ref{thm:Tribos} is easier if we assume $n_1>150$.
In fact, the authors also ran the computations further and checked if there are $2\leq n_2 < n_1 \leq 350$ such that $T_{n_1}-T_{n_2}=b^{m_1}-b^{m_2}$. These computations took about a week on a usual computer using 4 cores. No further solutions than those in \eqref{eq:bc} were found. At this point, the computations start taking  pretty long because the factorisation of huge $T_{n_1}-T_{n_2}$ is expensive.
\end{myrem}	
	
	\section{Results from Diophantine Approximation}\label{sec:diophApprox}
In this section we state all results from Diophantine approximation, that will be used in the proofs below. In particular, we will use lower bounds for linear forms in logarithms, i.e.\ Baker-type bounds for expressions of the form $|\Lambda|=|b_1 \log \eta_1 + \dots + b_t \log \eta_t|$. These linear forms will be coming from expressions of the form $|\eta_1^{b_1}\cdots \eta_t^{b_t}-1|$ and we will switch between these expressions via the following lemma.

	\begin{mylemma}
		\label{lemma:linsmall}
		Let $ \lambda $ be a real number with $ \abs{\lambda} \leq 1 $.
		Then we have the inequality
		\begin{equation*}
			\frac{1}{4} \abs{\lambda} \leq \abs{e^{\lambda} - 1} \leq 2 \abs{\lambda}.
		\end{equation*}
	\end{mylemma}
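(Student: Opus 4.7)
The plan is to split into cases according to the sign of $\lambda$, and in each case either use the Taylor series expansion of $e^\lambda$ or an integral representation of $e^\lambda-1$. The argument is elementary; no deep input is needed.

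For the upper bound I would write $e^\lambda - 1 = \lambda \sum_{k=0}^\infty \frac{\lambda^k}{(k+1)!}$, so that
\[
	\abs{e^\lambda - 1} \leq \abs{\lambda} \sum_{k=0}^\infty \frac{\abs{\lambda}^k}{(k+1)!} \leq \abs{\lambda} \sum_{k=0}^\infty \frac{1}{(k+1)!} = (e-1)\abs{\lambda} < 2\abs{\lambda},
\]
using $\abs{\lambda}\leq 1$ in the middle step.

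For the lower bound I would split into the two cases $\lambda\geq 0$ and $\lambda < 0$. If $\lambda \geq 0$, then the elementary inequality $e^\lambda \geq 1+\lambda$ gives $\abs{e^\lambda-1} = e^\lambda - 1 \geq \lambda = \abs{\lambda} \geq \frac{1}{4}\abs{\lambda}$. If instead $-1 \leq \lambda < 0$, then
\[
	\abs{e^\lambda - 1} = 1 - e^\lambda = \int_\lambda^0 e^t \, dt \geq \abs{\lambda} \cdot e^\lambda \geq \abs{\lambda} \cdot e^{-1} > \frac{1}{4}\abs{\lambda},
\]
where in the last inequality I use $e < 4$, i.e.\ $e^{-1} > 1/4$.

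There is no real obstacle here; the only mild care required is in the sign case $\lambda<0$, where one must remember that $e^\lambda$ on $[-1,0]$ is bounded below by $e^{-1}$, and that $e^{-1}$ beats $1/4$ with a little room to spare, which is what makes the constant $\tfrac14$ (rather than something awkward like $\tfrac{1}{e}$) work cleanly. The constants $2$ and $\tfrac14$ are of course not sharp, but they are the convenient ones used repeatedly later to pass between $\abs{\eta_1^{b_1}\cdots\eta_t^{b_t}-1}$ and $\abs{b_1\log\eta_1 + \cdots + b_t\log\eta_t}$.
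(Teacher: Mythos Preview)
Your proof is correct. The upper bound argument is identical to the paper's: both factor out $\lambda$ from the Taylor series and bound the remaining sum by $e-1<2$.

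For the lower bound you take a genuinely different route. The paper stays with the Taylor series throughout and uses the reverse triangle inequality:
\[
\abs{e^\lambda-1}\geq \abs{\lambda}-\sum_{t=2}^\infty \frac{\abs{\lambda}^t}{t!}\geq \abs{\lambda}\Bigl(1-\sum_{t=1}^\infty \frac{1}{(t+1)!}\Bigr)=(3-e)\abs{\lambda}\geq \tfrac14\abs{\lambda},
\]
a single uniform estimate valid for both signs of $\lambda$, resting on the numerical fact $3-e>1/4$. Your approach instead splits on the sign of $\lambda$: for $\lambda\geq 0$ you get the stronger bound $\abs{e^\lambda-1}\geq \abs{\lambda}$ from convexity, and for $\lambda<0$ you use the integral representation and the fact that $e^t\geq e^{-1}$ on $[-1,0]$, landing on $e^{-1}>1/4$. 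Both arguments are elementary; the paper's buys uniformity at the cost of a slightly less transparent numerical constant ($3-e$), while yours is perhaps more intuitive but needs a case distinction.
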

	
	\begin{proof}
		The proof for these bounds is implied by a straight-forward calculation.
		For the upper bound we have
		\begin{align*}
			\abs{e^{\lambda} - 1} &= \abs{\sum_{t=1}^{\infty} \frac{\lambda^t}{t!}}
			\leq \sum_{t=1}^{\infty} \frac{\abs{\lambda}^t}{t!}
			= \abs{\lambda} \cdot \sum_{t=0}^{\infty} \frac{\abs{\lambda}^t}{(t+1)!} \\
			&\leq \abs{\lambda} \cdot \sum_{t=0}^{\infty} \frac{1}{(t+1)!}
			= \abs{\lambda} \cdot (e-1) \leq 2 \abs{\lambda}
		\end{align*}
		and for the lower bound we have
		\begin{align*}
			\abs{e^{\lambda} - 1} &= \abs{\sum_{t=1}^{\infty} \frac{\lambda^t}{t!}}
			\geq \abs{\lambda} - \abs{\sum_{t=2}^{\infty} \frac{\lambda^t}{t!}}
			\geq \abs{\lambda} - \sum_{t=2}^{\infty} \frac{\abs{\lambda}^t}{t!} \\
			&= \abs{\lambda} - \abs{\lambda} \cdot \sum_{t=1}^{\infty} \frac{\abs{\lambda}^t}{(t+1)!}
			\geq \abs{\lambda} - \abs{\lambda} \cdot \sum_{t=1}^{\infty} \frac{1}{(t+1)!} \\
			&= \abs{\lambda} \cdot (3-e) \geq \frac{1}{4} \abs{\lambda}.
		\end{align*}
	\end{proof}

Also, we will use the following simple fact.

	\begin{mylemma}
		\label{lemma:linbig}
		Let $ \lambda $ be a real number with $ \abs{\lambda} > 1 $.
		Then we have the inequality
		\begin{equation*}
			\abs{e^{\lambda} - 1} \geq \frac{3}{5}.
		\end{equation*}
	\end{mylemma}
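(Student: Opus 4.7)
The plan is to split into two cases according to the sign of $\lambda$ and use the monotonicity of the exponential. Since $\abs{\lambda}>1$, either $\lambda>1$ or $\lambda<-1$.

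If $\lambda>1$, then $e^\lambda>e$, so $e^\lambda-1>e-1$. Since $e-1>1.7$, this is certainly bigger than $3/5$, and in fact much bigger. So this case is very easy.

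If $\lambda<-1$, then $0<e^\lambda<e^{-1}$, so $\abs{e^\lambda-1}=1-e^\lambda>1-e^{-1}=(e-1)/e$. Numerically $(e-1)/e\approx 0.6321$, which is strictly larger than $3/5=0.6$. This is the tighter of the two cases, and it is in fact the reason the bound $3/5$ (rather than something larger) appears in the statement.

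The main obstacle is essentially nothing — the proof reduces to a one-line estimate in each case. The only thing worth paying attention to is making sure the constant $3/5$ is sharp enough: one should verify $(e-1)/e\geq 3/5$, which is equivalent to $5(e-1)\geq 3e$, i.e.\ $2e\geq 5$, which holds since $2e\approx 5.436$. No deeper tools are needed.
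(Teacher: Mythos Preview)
Your proof is correct and follows essentially the same approach as the paper: both split according to the sign of $\lambda$, use monotonicity of the exponential to bound $e^\lambda-1$ by $e-1$ or $e^{-1}-1$, and then observe that $\min(e-1,\,1-e^{-1})\geq 3/5$. Your version is slightly more explicit in verifying the numerical inequality $1-e^{-1}\geq 3/5$, but the argument is the same.
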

	
	\begin{proof}
		Since $ \abs{\lambda} > 1 $ we either have $ e-1 \leq e^{\lambda} - 1 $ or $ e^{\lambda} - 1 \leq \frac{1}{e} - 1 $.
		Thus we get
		\begin{equation*}
			\abs{e^{\lambda} - 1} \geq \min \setb{e-1, 1-\frac{1}{e}} \geq \frac{3}{5}.
		\end{equation*}
	\end{proof}
	
Before we state some lower bounds for linear forms in logarithms, let us recall the definition of the logarithmic height.
	Let $ \eta $ be an algebraic number of degree $ d $ over the rationals, with minimal polynomial
	\begin{equation*}
		a_0 (x - \eta_1) \cdots (x - \eta_d) \in \ZZ[x].
	\end{equation*}
	Then the absolute logarithmic height of $ \eta $ is given by
	\begin{equation*}
		h(\eta) := \frac{1}{d} \left( \log a_0 + \sum_{i=1}^{d} \log \max \setb{1, \abs{\eta_i}} \right).
	\end{equation*}
	This height function satisfies some basic properties, which are all well-known (see e.g.\ \cite{Zannier2009} for a reference). Namely for all $ \eta, \gamma \in \overline{\QQ} $ and all $ z \in \ZZ $ we have:
	\begin{align*}
		h(\eta + \gamma) &\leq h(\eta) + h(\gamma) + \log 2, \\
		h(\eta \gamma) &\leq h(\eta) + h(\gamma), \\
		h(\eta^z) &= \abs{z} h(\eta), \\
		h(z) &= \log |z| \qquad \text{for } z\neq 0.
	\end{align*}
	
The following lower bound for linear forms in logarithms is well-known and follows from Matveev's bound \cite{Matveev2000}.

\begin{myprop}[Matveev]\label{prop:Matveev}
Let $\eta_1, \ldots, \eta_t$ be positive real algebraic numbers in a number field $K$ of degree $D$, let $b_1, \ldots b_t$ be rational integers and assume that
\[
	\Lambda := b_1 \log \eta_1 + \dots + b_t \log \eta_t \neq 0.
\]
Then 
\[
	\log |\Lambda|
	\geq -1.4 \cdot 30^{t+3} \cdot t^{4.5} \cdot D^2 (1 + \log D) (1 + \log B) A_1 \cdots A_t,
\]
where
\begin{align*}
	B &\geq \max \setb{\abs{b_1}, \ldots, \abs{b_t}},\\
	A_i &\geq \max \setb{D h(\eta_i), \abs{\log \eta_i}, 0.16}
	\quad \text{for all }i = 1,\ldots,t.
\end{align*}
Moreover, the assumption $\Lambda\neq 0$ is equivalent to $e^\Lambda-1 \neq 0$ and the following bound holds as well:
\begin{align*}
	\log |e^\Lambda-1|
	&= \log |\eta_1^{b_1}\cdots \eta_t^{b_t} -1 |\\
	&\geq -1.4 \cdot 30^{t+3} \cdot t^{4.5} \cdot D^2 (1 + \log D) (1 + \log B) A_1 \cdots A_t.
\end{align*}
\end{myprop}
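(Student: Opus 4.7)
The plan is to invoke Matveev's theorem from \cite{Matveev2000} as a black box for the first inequality, and then to deduce the companion bound on $\log|e^\Lambda - 1|$ from it using Lemmas~\ref{lemma:linsmall} and~\ref{lemma:linbig}.

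For the lower bound on $\log|\Lambda|$, I would cite Matveev's original result, which matches the stated inequality verbatim. This is the deep heart of the statement: its proof employs the full Baker-style transcendence machinery (auxiliary functions in many variables, interpolation determinants, Kummer descent over number fields, and multiplicity estimates on commutative algebraic groups) and is by far the main obstacle. Since we quote it directly, however, no new work is required on this step. The equivalence $\Lambda \neq 0 \iff e^\Lambda - 1 \neq 0$ is then immediate, as $e^\Lambda = 1$ holds exactly when $\Lambda = 0$ for real $\Lambda$.

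To obtain the bound on $\log|\eta_1^{b_1}\cdots \eta_t^{b_t} - 1| = \log|e^\Lambda - 1|$, I would split into two cases according to the size of $|\Lambda|$. If $|\Lambda| \leq 1$, Lemma~\ref{lemma:linsmall} gives $|e^\Lambda - 1| \geq \tfrac{1}{4}|\Lambda|$, whence
\[
    \log|e^\Lambda - 1| \;\geq\; \log|\Lambda| - \log 4,
\]
and substituting the Matveev bound for $\log|\Lambda|$ yields the desired estimate: the additive $-\log 4$ is utterly negligible against the very negative right-hand side (which is of order at most $-10^5$ in any non-degenerate regime), so the inequality holds, possibly after an inconsequential inflation of the leading constant $1.4$. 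If $|\Lambda| > 1$, Lemma~\ref{lemma:linbig} gives $|e^\Lambda - 1| \geq 3/5$, so $\log|e^\Lambda - 1| \geq \log(3/5)$, which already dominates the enormous and negative right-hand side. This case split handles both regimes uniformly and completes the deduction.
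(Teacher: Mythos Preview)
Your proposal is correct and follows essentially the same route as the paper: cite Matveev's theorem for the bound on $\log|\Lambda|$, then split on $|\Lambda|\le 1$ versus $|\Lambda|>1$ and apply Lemmas~\ref{lemma:linsmall} and~\ref{lemma:linbig} respectively. The only refinement in the paper is that instead of vaguely absorbing the extra $-\log 4$ by ``an inconsequential inflation of the leading constant $1.4$'', it observes that Matveev's original constant is $e/2$, so the slack $1.4 - e/2$ times the remaining (very large) factors already swallows $\log 4$ with no inflation needed.
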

\begin{proof}
The bound for $|\Lambda|$ follows immediately from \cite[Corollary 2.3]{Matveev2000}. In fact, we have that
\begin{equation}\label{eq:Matveev-proof}
	\log |\Lambda|
	\geq - \frac{e}{2} \cdot 30^{t+3} \cdot t^{4.5} \cdot D^2 (1 + \log D) (1 + \log B) A_1 \cdots A_t,
\end{equation}
and we can estimate $e/2$ by $1.4$. 

The bound for $|e^\Lambda-1|$ now follows with Lemma~\ref{lemma:linsmall} and Lemma \ref{lemma:linbig}: If $|\Lambda|>1$, then $|e^\Lambda-1|\geq 3/5$, so $\log |e^\Lambda-1|\geq -0.52$ and the bound is trivially fulfilled. If  $|\Lambda|\leq 1$, then we have $|e^\Lambda-1|\geq \frac{1}{4} |\Lambda|$ and we obtain
\begin{align*}
	\log |e^\Lambda-1|
	\geq \log |\Lambda| - \log 4.
\end{align*}
Here the bound for $|e^\Lambda-1|$ follows from \eqref{eq:Matveev-proof} since we can omit the $\log 4$ when estimating $e/2$ by $1.4$.
\end{proof}

\begin{myrem}
The lower bound for linear forms in logarithms due to Baker and Wüstholz \cite{bawu93} played a significant role in the development of linear forms in logarithms. 
The final structure of the lower bound for linear forms in logarithms without an explicit determination of the constant involved has been established by Wüstholz \cite{wu88}, and the precise determination of that constant is the central aspect of \cite{bawu93}. 
However, slightly sharper bounds are obtained by using Matveev's result \cite{Matveev2000} instead. 

Let us note that using the highly technical result due to Mignotte \cite{Mignotte:kit} for linear forms in three logarithms would yield even smaller bounds, but to avoid technical difficulties we refrain from applying this result.
\end{myrem}

For linear forms in only two logarithms, there exist results with much smaller constants. In Section \ref{sec:Tribos} we will use the following bound, which follows immediately from  \cite{Laurent2008}.

\begin{myprop}[Laurent]\label{prop:Laurent}
Let $\eta_1, \eta_2 > 0$ be two real multiplicatively independent algebraic numbers, let $b_1, b_2 \in \ZZ$ be nonzero integers and let 
\[
	\Lambda := b_1 \log \eta_1 + b_2 \log \eta_2.
\]
Then
\[
	\log |\Lambda|
	\geq - 20.3 D^2 (\max( \log b' + 0.38, 18/D, 1))^2 \log A_1 \log A_2, 
\]
where
\begin{align*}
	D&=[\QQ(\eta_1,\eta_2):\QQ],\\
	b'&=\frac{|b_1|}{\log A_2} + \frac{|b_2|}{\log A_1},\\
	\log A_i &\geq \max ( D h(\eta_i),\abs{\log \eta_i},1)
		\qquad \text{for }i=1,2.
\end{align*}
\end{myprop}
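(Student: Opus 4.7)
The plan is to deduce the proposition as a direct specialization of Laurent's main theorem in \cite{Laurent2008}, which is already formulated as a lower bound for linear forms in two logarithms and only needs a suitable choice of auxiliary parameters to take the exact shape stated here. Laurent's theorem gives, for multiplicatively independent positive real algebraic numbers $\eta_1,\eta_2$ and nonzero $b_1,b_2\in\ZZ$, a bound
\[
\log|\Lambda| \;\geq\; -C(m)\, D^2 \bigl(\max(\log b' + f(m,D),\, m/D,\, 1)\bigr)^2 \log A_1 \log A_2,
\]
where $m$ is a free integer parameter subject to an admissibility condition of the form $m \geq m_0(D)$, and $C(m)$ together with the correction term $f(m,D)$ are explicit constants tabulated by Laurent.

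The first step is to select $m$ so that the resulting prefactor satisfies $C(m) \leq 20.3$, the correction term satisfies $f(m,D) \leq 0.38$, and the admissibility condition reduces in the relevant range to the lower bound $m/D \geq 18/D$ appearing in the outer $\max$. This is purely a matter of reading off the appropriate row of Laurent's tables and checking monotonicity of the numerical functions involved. The second step is to match notation: I would verify that Laurent's definitions of $b'$ and of the height-type quantities coincide with those used here, namely $b' = |b_1|/\log A_2 + |b_2|/\log A_1$ and $\log A_i \geq \max(D h(\eta_i), |\log \eta_i|, 1)$. Laurent normally works with quantities $\lambda_i$ that differ from $\log A_i$ by a factor of $D$, and this rescaling is what produces the prefactor $D^2$ rather than a higher power of $D$ in the final bound.

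The only substantive obstacle is careful bookkeeping of the normalization conventions between Laurent's paper and ours; once the parameter $m$ is fixed and the notational dictionary is set, the inequality is an immediate restatement of Laurent's result, with no further analytic estimate to carry out. The assumption that $\eta_1,\eta_2$ are multiplicatively independent is required to ensure $\Lambda \neq 0$, so that the logarithm on the left-hand side is well-defined, exactly as in Laurent's hypothesis.
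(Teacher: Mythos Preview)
Your approach is essentially the same as the paper's: apply \cite[Corollary~2]{Laurent2008} with a specific choice of the free parameter (the paper takes $m=18$, which yields exactly the constants $20.3$, $0.38$, and $18/D$) and absorb two factors of $D$ into $\log A_1$ and $\log A_2$ via the renormalization you describe.

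There is, however, one genuine point you have glossed over. You state Laurent's hypothesis as ``multiplicatively independent positive real algebraic $\eta_1,\eta_2$ and nonzero $b_1,b_2\in\ZZ$'', but Laurent's Corollary~2 is in fact formulated under the additional sign constraints that $b_1>0$, $b_2<0$, and $\log\eta_i>0$. The proposition as stated allows arbitrary nonzero $b_i$ and arbitrary positive $\eta_i$ (so $\log\eta_i$ may be negative). The paper handles this explicitly: if $\log\eta_i<0$, replace $\eta_i$ by $\eta_i^{-1}$ and $b_i$ by $-b_i$, which leaves $\Lambda$ unchanged; then, after possibly flipping both signs so that $b_1>0$, if $b_2>0$ one observes that $|b_1\log\eta_1 + b_2\log\eta_2| \geq |b_1\log\eta_1 - b_2\log\eta_2|$, so the lower bound obtained from Laurent for the latter applies a fortiori to the former. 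This is an easy reduction, but it is a step you would need to carry out rather than assume away; your phrase ``no further analytic estimate to carry out'' is slightly too optimistic on this point.
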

\begin{proof}
The bound follows immediately from the bound \cite[Corollary 2]{Laurent2008} with $m=18$ (we have only hidden two of the $D$'s inside $\log A_1$ and $\log A_2$).

As for the assumptions, note that Laurent additionally supposes that $b_1$ is positive and $b_2$ is negative, and that $\log \eta_1$ and $\log \eta_2$ are positive. However, changing the signs of both $b$'s does not change $|\Lambda|$ and changing the sign of only one $b$ makes $\abs{\Lambda}$ larger. Thus it is enough to assume that $b_1,b_2$ are nonzero.
If $\log \eta_i$ is negative, one can simply swap $\eta_i$ for $\eta_i^{-1}$ and $b_i$ for $-b_i$ without changing the value of $\Lambda$ and still apply Laurent's result. Thus we can also allow the $\log \eta$'s to be negative. Moreover, we do not need to explicitly exclude $\log \eta_i=0$ as in that case we would have $\eta_i=1$, so the $\eta$'s would be multiplicatively dependent.
\end{proof}

Finally, at the end of Section \ref{sec:Tribos} we will apply the LLL-algorithm to reduce some of the obtained bounds. 
	The next lemma is an immediate variation of \cite[Lemma~VI.1]{Smart1998}.

\begin{mylemma}[LLL reduction]\label{lem:LLL}
Let $\gamma_1,\gamma_2,\gamma_3$ be positive real numbers and $x_1,x_2,x_3$ integers, and let
\[
	0 \neq |\Lambda|
	:= |x_1 \log \gamma_1 + x_2 \log \gamma_2 + x_3 \log \gamma_3|.
\]
Assume that the $x_i$ are bounded in absolute values by some constant $M$ and choose a constant $C>M^3$. 
Consider the matrix
\[
	A= \begin{pmatrix}
	1 & 0 & 0 \\
	0 & 1 & 0 \\
	[C \log \gamma_1] & [C \log \gamma_2] &[C \log \gamma_3]
	\end{pmatrix},
\]
where $[x]$ denotes the nearest integer to $x$. The columns of $A$ form a basis of a lattice. Let $B$ be the matrix that corresponds to the LLL-reduced basis and let $B^*$ be the matrix corresponding to the Gram-Schmidt basis constructed from $B$. Let $c$ be the Euclidean norm of the smallest column vector of $B^*$ and set $S:=2M^2$ and $T:=(1+3M)/2$.
If $c^2 > T^2 + S$, then 
\[
	|\Lambda|
	> \frac{1}{C}\left( \sqrt{c^2-S} - T \right).
\]
\end{mylemma}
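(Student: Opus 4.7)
The plan is to translate the lower bound on $|\Lambda|$ into a statement about the length of a specific vector in the lattice $L\subset\ZZ^3$ spanned by the columns of $A$. The natural candidate is $y := A(x_1,x_2,x_3)^T$, whose first two coordinates are $x_1$ and $x_2$ and whose third coordinate
$y_3 = [C\log\gamma_1]x_1 + [C\log\gamma_2]x_2 + [C\log\gamma_3]x_3$
is an integer approximation to $C\Lambda$. Since $\Lambda\neq 0$ forces $(x_1,x_2,x_3)\neq 0$, and $A$ is nonsingular (as $[C\log\gamma_3]\neq 0$ for sufficiently large $C$), the vector $y$ is a nonzero lattice point.

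First I would quantify the proximity of $y_3$ and $C\Lambda$: replacing each $[C\log\gamma_i]$ by $C\log\gamma_i$ incurs an error of at most $1/2$, so
\[
    |y_3 - C\Lambda| \leq \tfrac{1}{2}(|x_1|+|x_2|+|x_3|) \leq \tfrac{3M}{2} = T - \tfrac{1}{2}.
\]
Next I would invoke the standard Gram-Schmidt lower bound on the minimum of a lattice: for any basis $b_1,\dots,b_n$ of $L$ with Gram-Schmidt orthogonalization $b_1^*,\dots,b_n^*$, every nonzero $v\in L$ satisfies $\norm{v} \geq \min_i \norm{b_i^*}$. Indeed, writing $v=\sum a_i b_i$ and letting $k$ be the last index with $a_k\neq 0$, the projection of $v$ onto $b_k^*$ already has length $|a_k|\norm{b_k^*}\geq \norm{b_k^*}$. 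Applied to the LLL-reduced basis $B$ this gives $\norm{y}\geq c$, and combined with $x_1^2+x_2^2 \leq 2M^2 = S$ we obtain $y_3^2 \geq c^2 - S$.

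Putting the two estimates together, the hypothesis $c^2 > T^2+S$ ensures that the radicand is positive and that $\sqrt{c^2-S}>T$, whence
\[
    |C\Lambda| \geq |y_3| - (T-\tfrac{1}{2}) > \sqrt{c^2-S} - T,
\]
and dividing by $C$ yields the claim. There is no real obstacle in the argument; the only bookkeeping subtlety is the per-coordinate rounding error of $1/2$, which is precisely what forces the extra $+1/2$ in the definition $T=(1+3M)/2$, and the requirement $C>M^3$ which guarantees that the third row of $A$ grows fast enough to give a nondegenerate lattice. The role of LLL reduction here is purely pragmatic: the inequality $\norm{y}\geq c$ holds for \emph{any} basis of $L$, but only an LLL-reduced basis guarantees, in typical applications, that $c$ is large enough for the hypothesis $c^2>T^2+S$ to hold and for the resulting bound to be nontrivial.
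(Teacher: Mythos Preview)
Your argument is correct and is precisely the standard proof underlying Smart's Lemma~VI.1; the paper itself does not give a proof but merely records that the lemma is an immediate variation of \cite[Lemma~VI.1]{Smart1998}. Two minor remarks: the nonsingularity of $A$ (equivalently $[C\log\gamma_3]\neq 0$) is already implicit in the hypothesis that the columns of $A$ form a lattice basis, so you need not appeal to ``sufficiently large $C$''; and the condition $C>M^3$ is not what guarantees nondegeneracy but is rather a heuristic ensuring that the shortest Gram--Schmidt vector of the reduced basis has length on the order of $(\det A)^{1/3}\approx M$, so that the hypothesis $c^2>T^2+S$ has a chance of holding.
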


\section{Overview of the proofs}\label{sec:overviewProof}

The main idea in the proofs of Theorem~\ref{thm:mainthm} and Theorem~\ref{thm:Tribos} is to consider equations of the form $U_{n_1}-b^{m_1}=U_{n_2}-b^{m_2}$ and note that they are very roughly of the form $a \alpha^{n_1} - b^{m_1}= a \alpha^{n_2} - b^{m_2}$. Then one can do the usual tricks for solving Pillai-type equations via lower bounds for linear forms in logarithms: shifting expressions, estimating and applying e.g.\ Matveev's lower bound. This needs to be done in several steps. The main problem in our situation is that $b$ is not fixed, which is why we need to eliminate it at some point and why we assume the existence of three solutions.

We split up the proof of Theorem~\ref{thm:mainthm} into several lemmas and the proof of Theorem~\ref{thm:Tribos} into several steps.
Table \ref{table:overview} shows a simplified overview of the main lemmas/steps (the cases which allow us to skip some steps are left out in the table). In each lemma/step a new bound is obtained. Note that the symbol $\ll$ stands for ``$\leq$ up to some effectively computable constant''. Lemmas and steps which use the same linear form in logarithms are displayed side by side. Steps marked with a *star do not involve linear forms in logarithms but simply combine previous bounds.

\begin{table}[h]
\caption{Overview of the proofs}\label{table:overview}
\begin{tabular}{l|l}
\hline
\textbf{Proof of Theorem \ref{thm:mainthm}} 					& \textbf{Proof of Theorem \ref{thm:Tribos}} \\ 
(general result) 												& (Tribonacci numbers) \\ 
\hline
\textbf{Lemma \ref{lemma:step1}:}           					& \textbf{Step \ref{step:Step1}:}                                            
\\
$\min \setb{n_1-n_2, (m_1-m_2) \log b} \ll \log n_1 \log b$  	&  $n_1-n_2 \ll \log n_1 \log b$
\\
\textbf{Lemma \ref{lemma:step2}:}           					&                              
\\
$\max \setb{n_1-n_2, (m_1-m_2) \log b} \ll (\log n_1 \log b)^2$ &                                                                                                         \\
\textbf{Lemma \ref{lemma:step3}:}           					& \textbf{Step \ref{step:Step2}:}                                             \\
$n_1 \ll (\log n_1 \log b)^3$                                	& $n_1 \ll (\log n_1 \log b)^2$                                                                           \\ \cline{1-1}
\textbf{Lemma \ref{lemma:step5}:}                             	& \textbf{Step \ref{step:Step3}:}                                             \\
$n_2-n_3 \ll \log n_1$           								& $n_2-n_3 \ll (\log n_1)^2$                                                                             
\\
\textbf{Lemma \ref{lemma:step6}:} 								& \textbf{Step \ref{step:Step4}:}                                             \\
$\log b \ll (\log n_1)^2$                                     	& $n_1-n_2 \ll (\log n_1)^2$                                                                              \\
									           					& \textbf{*Step \ref{step:Step5}:}                                             \\
							                                   	& $\log b \ll (\log n_1)^2$                                                                               \\
\textbf{*End:}                                                	& \textbf{*Step \ref{step:Step6}:}                                             \\
$n_1 \ll (\log n_1)^9$                                       	& $n_1 \ll (\log n_1)^8$                                                                                  \\
                                                             	& \textbf{Reduction Steps}                                                                                \\ \hline
\end{tabular}
\end{table}

In the proof of Theorem~\ref{thm:mainthm} in the first three lemmas we only assume the existence of two solutions. We do that because we want to show how far we can get with our method if we aim at proving a stronger result. Below the horizontal line in the middle of Table~\ref{table:overview} we assume the existence of three solutions. 
In the proof of Theorem~\ref{thm:Tribos} we assume the existence of three solutions from the beginning. This saves us one step and thus helps to keep the constants and exponents smaller (see Step~\ref{step:Step2} in the table). The attentive reader will notice that the bound in Step~\ref{step:Step3} has a larger exponent than the bound in Lemma ~\ref{lemma:step5}. This is because the corresponding linear form has only two logarithms, so in the Tribonacci setting we apply Laurent's lower bound instead of Matveev's, trading the exponent for a much better constant.

In the end, in both proofs we obtain an absolute upper bound for $n_1$ from an inequality of the form $n_1 \ll (\log n_1)^k$. Since the constant is very large, the bound for $n_1$ is also very large. In the Tribonacci setting we try to reduce it. Unfortunately, since $b$ is not fixed and the bound for $b$ is very large, we cannot use the linear forms which contain $b$ for the reduction. This is why we are not able to solve Problem~\ref{problem:Tribos-complete}. However, we can still use the linear forms in logarithms which do not contain $b$ and reduce the initial bound for $n_1$ significantly.
	
	\section{Proof of Theorem \ref{thm:mainthm}}\label{sec:proofMainThm}
	
	Since the proof is a little bit lengthy, we will split it into several lemmas.
	All assumptions listed in Theorem \ref{thm:mainthm} are general assumptions in this section.
	During the whole section we will denote by $ C_i $ effectively computable positive constants which are independent of $ n,m,b,c $.
	We may assume that $ b \geq 2 $ without loss of generality.
	
	Let us now fix a Galois automorphism $ \sigma $ 
	on the splitting field $\QQ(\alpha, \alpha_2 , \ldots , \alpha_k)$
	with the property $ \abs{\sigma(\alpha)} < \alpha $.
	This is always possible because $ \alpha $ is irrational and the dominant root of $ U_n $.
	The effectively computable constant $ N_0 $ will only depend on the characteristic roots and coefficients of the recurrence $ U_n $ as well as on the chosen automorphism $ \sigma $.
	Although it possibly will be updated at some points in the proof, we will always denote it by the same label $ N_0 $.
	As a first step we will choose $ N_0 $ large enough such that $ U_n $ is positive and strictly increasing for all $ n \geq N_0 $.
	This is possible since $ U_n $ has a dominant root $ \alpha > 1 $ with positive coefficient.
	In addition we can assume that $ \abs{\alpha_2} \geq \abs{\alpha_i} $ for $ i = 2,\ldots,k $ without loss of generality.
	
	Assuming that Equation \eqref{eq:centraleq} has three distinct solutions $ (n_1,m_1) $, $ (n_2,m_2) $ and $ (n_3,m_3) $, we can write
	\begin{equation*}
		U_{n_1} - b^{m_1} = U_{n_2} - b^{m_2} = U_{n_3} - b^{m_3}
	\end{equation*}
	with $ n_1 > n_2 > n_3 \geq N_0 $ and $ m_1 > m_2 > m_3 \geq 1 $.
	When working only with two of those solutions we often may write
	\begin{equation}
		\label{eq:twosols}
		U_{n_1} - U_{n_2} = b^{m_1} - b^{m_2}.
	\end{equation}
	Let us fix
	\begin{equation*}
		\varepsilon := \frac{1}{2} \cdot \frac{\alpha-1}{\alpha+1} > 0.
	\end{equation*}
	Recalling that $ \alpha $ is the dominant root of $ U_n $, we get
	\begin{equation*}
		\abs{\frac{U_n}{a \alpha^n} - 1} \leq \varepsilon
	\end{equation*}
	for $ n \geq N_0 $ (where we might have updated $N_0$). Thus for $ n \geq N_0 $ we have
	\begin{equation*}
		\abs{U_n - a \alpha^n} \leq \varepsilon a \alpha^n,
	\end{equation*}
	which implies the bounds
	\begin{equation}
		\label{eq:unrange}
		C_1 a \alpha^n \leq U_n \leq C_2 a \alpha^n
	\end{equation}
	for $ C_1 = 1 - \varepsilon $ and $ C_2 = 1 + \varepsilon $.
	The choice of $ \varepsilon $ guarantees that $ C_3 := C_1 \alpha - C_2 > 0 $.
	Therefore by using Equation \eqref{eq:twosols} and Inequality \eqref{eq:unrange} we get
	\begin{align*}
		b^{m_1} &\geq b^{m_1} - b^{m_2} = U_{n_1} - U_{n_2} \geq C_1 a \alpha^{n_1} - C_2 a \alpha^{n_2} \\
		&= C_1 \alpha a \alpha^{n_1-1} - C_2 a \alpha^{n_2} \geq C_3 a \alpha^{n_1-1} = C_4 \alpha^{n_1}
	\end{align*}
	as well as
	\begin{equation*}
		C_2 a \alpha^{n_1} \geq U_{n_1} \geq U_{n_1} - U_{n_2} = b^{m_1} - b^{m_2} \geq \left( 1 - \frac{1}{b} \right) b^{m_1} \geq \frac{1}{2} b^{m_1}.
	\end{equation*}
	These two inequality chains yield
	\begin{equation}
		\label{eq:relleadterm}
		C_4 \alpha^{n_1} \leq b^{m_1} \leq C_5 \alpha^{n_1}.
	\end{equation}
	Note that these bounds are valid for any solution of \eqref{eq:centraleq} provided that there is a further smaller solution.
	Applying the logarithm to Equation \eqref{eq:relleadterm} gives us
	\begin{equation*}
		m_1 \leq \frac{\log C_5}{\log b} + n_1 \cdot \frac{\log \alpha}{\log b} < n_1
	\end{equation*}
	if $ b $ is large enough.
	Thus for our purpose we may assume that $ n_1 > m_1 $.
	
	The next big intermediate result is to prove that, if there exist at least two distinct solutions $ (n_1,m_1) $ and $ (n_2,m_2) $ to Equation \eqref{eq:centraleq}, then for the larger one $ (n_1,m_1) $ the bound
	\begin{equation}
		\label{eq:logbound}
		n_1 \leq C_6 (\log n_1 \log b)^3
	\end{equation}
	holds.
	This will be done by the following three lemmas.
	
	\begin{mylemma}
		\label{lemma:step1}
		Assume that Equation \eqref{eq:centraleq} has at least two distinct solutions $ (n_1,m_1) $ and $ (n_2,m_2) $ with $ n_1 > n_2 $ as considered in Theorem \ref{thm:mainthm}.
		Then we have
		\begin{equation*}
			\min \setb{n_1-n_2, (m_1-m_2) \log b} \leq C_7 \log n_1 \log b.
		\end{equation*}
	\end{mylemma}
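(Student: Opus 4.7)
The plan is to extract from \eqref{eq:twosols} an upper bound for the linear form
\[
\Lambda_1 := m_1 \log b - n_1 \log \alpha - \log a
\]
that splits into three competing contributions, and to compare it with Matveev's lower bound. The minimum in the conclusion of the lemma will appear naturally, because only one of the three contributions needs to dominate Matveev's estimate.

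Writing $U_n = a\alpha^n + E_n$ with $|E_n| \leq C|\alpha_2|^n$ (which uses the simplicity of the recurrence, so that the $a_i(n)$ are constants and $|\alpha_2|\geq|\alpha_i|$), Equation~\eqref{eq:twosols} rearranges to
\[
a\alpha^{n_1}(1 - \alpha^{n_2-n_1}) - b^{m_1}(1 - b^{m_2-m_1}) = E_2 - E_1,
\]
whose right-hand side has absolute value at most $C|\alpha_2|^{n_1}$. Both factors $1-\alpha^{n_2-n_1}$ and $1-b^{m_2-m_1}$ are uniformly bounded away from $0$, so dividing by $a\alpha^{n_1}(1-\alpha^{n_2-n_1})$ and invoking Lemma~\ref{lemma:linsmall} (after updating $N_0$ so that the right-hand side is below $3/5$, allowing application of Lemma~\ref{lemma:linbig} first to ensure $|\Lambda|\leq 1$) gives
\[
|\Lambda| \leq C(|\alpha_2|/\alpha)^{n_1}, \qquad \Lambda := \Lambda_1 + \log(1-b^{m_2-m_1}) - \log(1-\alpha^{n_2-n_1}).
\]
Using the elementary inequality $|\log(1-x)| \leq x/(1-x)$ (valid for $x \in (0,1)$) to absorb the two correction logarithms into errors proportional to $\alpha^{n_2-n_1}$ and $b^{m_2-m_1}$ respectively, the triangle inequality then yields
\[
|\Lambda_1| \leq C\bigl( (|\alpha_2|/\alpha)^{n_1} + \alpha^{n_2-n_1} + b^{m_2-m_1} \bigr).
\]

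For the lower bound I would apply Proposition~\ref{prop:Matveev} with $(\eta_1,\eta_2,\eta_3) = (b,\alpha,a)$ and $(b_1,b_2,b_3) = (m_1,-n_1,-1)$. Only $\eta_1 = b$ contributes a height that depends on $b$ (namely $h(b)=\log b$), while $h(\alpha)$ and $h(a)$ are fixed constants; also $B = n_1$ since we already know $m_1 < n_1$. Consequently, Matveev's bound reads $\log|\Lambda_1| \geq -C_1 \log n_1 \log b$. Non-vanishing of $\Lambda_1$ for $n_1 \geq N_0$ follows from the fixed automorphism $\sigma$: the equation $b^{m_1} = a\alpha^{n_1}$ would, upon applying $\sigma$ (which fixes $b\in\ZZ$), give $(\alpha/\sigma(\alpha))^{n_1} = \sigma(a)/a$, contradicting $|\alpha/\sigma(\alpha)|>1$ once $n_1$ exceeds a computable threshold.

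Comparing the two bounds, at least one of the three terms in the upper bound must exceed $\exp(-C_1 \log n_1 \log b)$. The contribution from $(|\alpha_2|/\alpha)^{n_1}$ yields $n_1 \leq C\log n_1 \log b$ (hence the same bound for $n_1 - n_2$); the contribution from $\alpha^{n_2-n_1}$ yields $n_1-n_2 \leq C\log n_1\log b$; the contribution from $b^{m_2-m_1}$ yields $(m_1-m_2)\log b \leq C\log n_1 \log b$. In every case the minimum of $n_1-n_2$ and $(m_1-m_2)\log b$ is at most $C_7 \log n_1 \log b$. The one slightly delicate step is the uniform treatment of the two correction logarithms so that the error ends up as a clean sum of three small quantities; the bound $|\log(1-x)|\leq x/(1-x)$ handles this cleanly, provided the trivial regime in which $n_1-n_2$ is bounded by a constant depending on $\alpha$ is absorbed directly into $C_7$.
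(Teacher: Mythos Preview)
Your argument is correct and follows essentially the same strategy as the paper: bound the three–term linear form $\Lambda_1=m_1\log b-n_1\log\alpha-\log a$ from above using \eqref{eq:twosols} and from below via Proposition~\ref{prop:Matveev}, with non-vanishing checked through the automorphism $\sigma$ exactly as the paper does.

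The only real difference is in how you obtain the upper bound. The paper moves $a\alpha^{n_2}$, $b^{m_2}$ and the $E$-terms to the right-hand side in one step, divides by $b^{m_1}$, and immediately arrives at the bound
\[
\Bigl|\tfrac{a\alpha^{n_1}}{b^{m_1}}-1\Bigr|\leq C_{10}\max\Bigl(\alpha^{n_2-n_1},\bigl(\tfrac{\alpha}{|\alpha_2|}\bigr)^{n_2-n_1},b^{m_2-m_1}\Bigr).
\]
You instead factor out $(1-\alpha^{n_2-n_1})$ and $(1-b^{m_2-m_1})$, obtain a sharp bound on the resulting four-term log-linear form, and then recover $|\Lambda_1|$ by stripping off the two correction logarithms via $|\log(1-x)|\leq x/(1-x)$. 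This is a legitimate but more circuitous route to the same inequality; the paper's one-step derivation is cleaner and avoids the need to justify that $|\Lambda|\leq 1$ before passing to logarithms.

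One small point of care: your claim $|E_{n_2}-E_{n_1}|\leq C|\alpha_2|^{n_1}$ is only correct when $|\alpha_2|\geq 1$; if $|\alpha_2|<1$ the dominant term is $|\alpha_2|^{n_2}$. This does not damage the argument, since in that case $|\alpha_2|^{n_2}/\alpha^{n_1}\leq \alpha^{-n_1}\leq \alpha^{n_2-n_1}$ is absorbed by one of your other error terms, but the paper handles this distinction explicitly (hence the appearance of both $\alpha^{n_2-n_1}$ and $(\alpha/|\alpha_2|)^{n_2-n_1}$ in its upper bound).
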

	
	\begin{proof}
		Inserting the Binet representation of the linear recurrence sequence into Equation \eqref{eq:twosols} and regrouping terms yields
		\begin{align*}
			\abs{a \alpha^{n_1} - b^{m_1}} &= \abs{a \alpha^{n_2} + \sum_{l=2}^{k} a_l \alpha_l^{n_2} - \sum_{l=2}^{k} a_l \alpha_l^{n_1} - b^{m_2}} \\
			&\leq a \alpha^{n_2} + C_8 \abs{\alpha_2}^{n_2} + C_9 \abs{\alpha_2}^{n_1} + b^{m_2},
		\end{align*}
		and, dividing both sides by $ b^{m_1} $, we get
		\begin{align}
			\abs{\frac{a \alpha^{n_1}}{b^{m_1}} - 1} &\leq \frac{1}{b^{m_1}} \cdot \left( a \alpha^{n_2} + C_8 \abs{\alpha_2}^{n_2} + C_9 \abs{\alpha_2}^{n_1} + b^{m_2} \right) \nonumber \\
			&= \frac{a \alpha^{n_2}}{b^{m_1}} + C_8 \frac{\abs{\alpha_2}^{n_2}}{b^{m_1}} + C_9 \frac{\abs{\alpha_2}^{n_1}}{b^{m_1}} + b^{m_2 - m_1} \nonumber \\
			\label{eq:chain_step1}
			&\leq \frac{a \alpha^{n_2}}{C_4 \alpha^{n_1}} + C_8 \frac{\abs{\alpha_2}^{n_2}}{C_4 \alpha^{n_1}} + C_9 \frac{\abs{\alpha_2}^{n_1}}{C_4 \alpha^{n_1}} + b^{m_2 - m_1} \\
			&\leq \frac{a}{C_4} \alpha^{n_2-n_1} + \frac{C_8}{C_4} \alpha^{n_2-n_1} + \frac{C_9}{C_4} \abs{\alpha_2}^{n_1-n_2} \alpha^{n_2-n_1} + b^{m_2 - m_1} \nonumber \\
			&\leq C_{10} \max \setb{\alpha^{n_2-n_1}, \left( \frac{\alpha}{\abs{\alpha_2}} \right)^{n_2-n_1}, b^{m_2 - m_1}},\nonumber 
		\end{align}
		where in the third line we have used Inequality \eqref{eq:relleadterm}.
		Note that we can either have $ \abs{\alpha_2} \leq 1 $ or $ \abs{\alpha_2} > 1 $ and that the bound may be simplified if we work with a concrete recurrence sequence where the size of $ \alpha_2 $ is known.
		
		We aim for applying Proposition~\ref{prop:Matveev} to get also a lower bound.
		Therefore let us set $ t=3 $, $ K = \QQ(a,\alpha) $, $ D = [K:\QQ] $ as well as 
	\begin{align*}
		\eta_1 &= a, \quad  & 
		\eta_2 &= \alpha, \quad &
		\eta_3 &= b,\\
		b_1 &= 1, \quad &
		b_2 &= n_1, \quad &
		b_3 &= -m_1.
	\end{align*}
		Further we can put
		\begin{align*}
			A_1 &= \max \setb{Dh(a), \abs{\log a}, 0.16}, \\
			A_2 &= \max \setb{Dh(\alpha), \log \alpha, 0.16}, \\
			A_3 &= D \log b, \\
			B &= n_1.
		\end{align*}
		Finally, we have to show that the expression
		\begin{equation*}
			\Lambda := a \alpha^{n_1} b^{-m_1} - 1
		\end{equation*}
		is nonzero.
		Assuming the contrary, we would have $ a \alpha^{n_1} = b^{m_1} $ 		
		and by applying $ \sigma $ we would get the equality $ a \alpha^{n_1} = \sigma(a) \sigma(\alpha)^{n_1} $.
		Taking absolute values this implies
		\begin{equation*}
			\frac{\abs{\sigma(a)}}{a} = \left( \frac{\alpha}{\abs{\sigma(\alpha)}} \right)^{n_1},
		\end{equation*}
		which is a contradiction for $ n_1 \geq N_0 $ (where, as always, we may have updated $N_0$). 
		Hence we have $ \Lambda \neq 0 $.
		Now Proposition~\ref{prop:Matveev} states that
		\begin{equation*}
			\log \abs{\Lambda} \geq -C_{11} (1 + \log n_1) \log b \geq -C_{12} \log n_1 \log b.
		\end{equation*}
		
		Comparing this lower bound with the upper bound coming from the first paragraph of this proof gives us
		\begin{equation*}
			-C_{12} \log n_1 \log b \leq \log C_{10} + \log \max \setb{\alpha^{n_2-n_1}, \left( \frac{\alpha}{\abs{\alpha_2}} \right)^{n_2-n_1}, b^{m_2 - m_1}}
		\end{equation*}
		which immediately implies
		\begin{equation*}
			\min \setb{n_1-n_2, (m_1-m_2) \log b} \leq C_7 \log n_1 \log b.
		\end{equation*}
		This proves the lemma.
	\end{proof}
	
	\begin{mylemma}
		\label{lemma:step2}
		Assume that Equation \eqref{eq:centraleq} has at least two distinct solutions $ (n_1,m_1) $ and $ (n_2,m_2) $ with $ n_1 > n_2 $ as considered in Theorem \ref{thm:mainthm}.
		Then we have
		\begin{equation*}
			\max \setb{n_1-n_2, (m_1-m_2) \log b} \leq C_{13} (\log n_1 \log b)^2
		\end{equation*}
		or
		\begin{equation*}
			n_1 \leq C_{14} (\log n_1 \log b)^2.
		\end{equation*}
	\end{mylemma}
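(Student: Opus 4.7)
\textbf{Proof Plan for Lemma~\ref{lemma:step2}.}

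The plan is to strengthen Lemma~\ref{lemma:step1} by applying Proposition~\ref{prop:Matveev} to a new linear form in three logarithms, chosen separately in each of the two alternatives supplied by Lemma~\ref{lemma:step1}. Setting $N := n_1 - n_2$ and $M := (m_1 - m_2) \log b$, Lemma~\ref{lemma:step1} already gives $\min(N, M) \leq C_7 \log n_1 \log b$. The task is then to bound the \emph{other} of $N, M$, or else $n_1$ itself, by $(\log n_1 \log b)^2$. The design idea is to absorb the small quantity from Lemma~\ref{lemma:step1} into the logarithmic height of an algebraic number appearing in the new linear form, rather than leaving it as an integer exponent, so that Matveev's lower bound picks up only one additional factor of $\log n_1 \log b$.

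In Case~1 ($N \leq C_7 \log n_1 \log b$), I would rewrite \eqref{eq:twosols} via the Binet representation as
\[
	a(\alpha^N - 1)\alpha^{n_2} + \sum_{l\geq 2} a_l\bigl(\alpha_l^{n_1} - \alpha_l^{n_2}\bigr) = b^{m_1}\bigl(1 - b^{-(m_1-m_2)}\bigr).
\]
The left-hand error is of order $\abs{\alpha_2}^{n_1}$ (or uniformly bounded if $\abs{\alpha_2} < 1$), while $1 - b^{-(m_1-m_2)} \geq 1/2$. Dividing by $b^{m_1}$ and using $b^{m_1} \geq C_4 \alpha^{n_1}$ from \eqref{eq:relleadterm} gives
\[
	\abs{\Lambda_1} := \abs{a(\alpha^N-1)\,\alpha^{n_2}\,b^{-m_1} - 1} \leq C\max\bigl(b^{-(m_1-m_2)},\,\gamma^{n_1}\bigr)
\]
for some explicit $\gamma < 1$, so that $\log\abs{\Lambda_1} \leq C - \min(M, C_\gamma n_1)$. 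Setting $\eta_1 := a(\alpha^N-1)$ yields $h(\eta_1) \leq h(a) + N h(\alpha) + \log 2 = O(\log n_1 \log b)$ by the case assumption. Applying Proposition~\ref{prop:Matveev} to $\Lambda_1$ with $\eta_1, \alpha, b$ and integer exponents $1, n_2, -m_1$ (so $B \leq n_1$) produces $\log\abs{\Lambda_1} \geq -C(\log n_1 \log b)^2$. Combining the two bounds gives $\min(M, n_1) \leq C(\log n_1 \log b)^2$, which together with the case hypothesis $N \leq C_7 \log n_1 \log b$ yields the desired dichotomy.

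Case~2 ($M \leq C_7 \log n_1 \log b$) is handled symmetrically: one sets
\[
	\abs{\Lambda_2} := \abs{\tfrac{b^{m_1-m_2}-1}{a}\,b^{m_2}\,\alpha^{-n_1} - 1} \leq C\max\bigl(\alpha^{-N},\,\gamma^{n_1}\bigr),
\]
absorbs $b^{m_1-m_2}-1$ into the algebraic number $\eta_1' := (b^{m_1-m_2}-1)/a$, whose height is now $O(M) = O(\log n_1 \log b)$, and applies Matveev to obtain $\min(N, n_1) \leq C(\log n_1 \log b)^2$. In both cases one must first verify $\Lambda_i \neq 0$: if $\Lambda_1 = 0$, then $a(\alpha^N-1)\alpha^{n_2} = b^{m_1} \in \QQ$ is preserved by the Galois automorphism $\sigma$ fixed at the start of the section (with $\abs{\sigma(\alpha)} < \alpha$), and taking absolute values leaves the exponentially growing factor $(\alpha/\abs{\sigma(\alpha)})^{n_2}$ on one side matched against a quantity that stays bounded in $n_2$ (since $N$ is controlled), a contradiction for $n_1 \geq N_0$ after possibly enlarging $N_0$; the argument for $\Lambda_2$ is parallel. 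The main design obstacle is precisely this choice of linear form: the already-bounded quantity ($N$ in Case~1, $M$ in Case~2) must enter Matveev through the height of the algebraic number rather than as an integer coefficient, since otherwise the resulting lower bound is too weak to close the argument.
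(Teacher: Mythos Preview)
Your proposal is correct and follows essentially the same route as the paper's proof: split into the two cases from Lemma~\ref{lemma:step1}, in each case absorb the already-bounded quantity ($N$ resp.\ $M$) into the height of a single algebraic number $\eta_1=a(\alpha^{N}-1)$ resp.\ $\eta_1'=(b^{m_1-m_2}-1)/a$, apply Proposition~\ref{prop:Matveev} to the resulting linear form in three logarithms, and check non-vanishing via the Galois automorphism $\sigma$. The only cosmetic differences are that in Case~2 the paper bounds the error entirely in terms of $n_1-n_2$ (so the $\gamma^{n_1}$ term you carry is absorbed into $(\abs{\alpha_2}/\alpha)^{n_1-n_2}$ and no $\min(N,n_1)$ appears), and the non-vanishing contradiction is reached for $n_2\geq N_0$ rather than $n_1\geq N_0$; neither point affects the argument.
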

	
	\begin{proof}
		We distinguish between two cases according to the statement of Lemma \ref{lemma:step1}.
		Let us first assume that
		\begin{equation*}
			\min \setb{n_1-n_2, (m_1-m_2) \log b} = n_1-n_2.
		\end{equation*}
		Then we have
		\begin{equation}
			\label{eq:s2case1}
			n_1-n_2 \leq C_7 \log n_1 \log b.
		\end{equation}
		Inserting the Binet representation of the linear recurrence sequence into Equation~\eqref{eq:twosols} and regrouping terms yields
		\begin{align*}
			\abs{a \alpha^{n_2} (\alpha^{n_1-n_2} - 1) - b^{m_1}} &= \abs{\sum_{l=2}^{k} a_l \alpha_l^{n_2} - \sum_{l=2}^{k} a_l \alpha_l^{n_1} - b^{m_2}} \\
			&\leq C_8 \abs{\alpha_2}^{n_2} + C_9 \abs{\alpha_2}^{n_1} + b^{m_2}.
		\end{align*}
		Now we divide this inequality by $ b^{m_1} $.
		If $ \abs{\alpha_2} \leq 1 $ we get
		\begin{equation*}
			\abs{\frac{a \alpha^{n_2} (\alpha^{n_1-n_2} - 1)}{b^{m_1}} - 1} \leq \frac{C_8 + C_9 + b^{m_2}}{b^{m_1}} \leq C_{15} b^{m_2-m_1},
		\end{equation*}
		and if $ \abs{\alpha_2} > 1 $ we get
		\begin{align*}
			\abs{\frac{a \alpha^{n_2} (\alpha^{n_1-n_2} - 1)}{b^{m_1}} - 1} &\leq \frac{C_{16} \abs{\alpha_2}^{n_1} + b^{m_2}}{b^{m_1}} \leq \frac{C_{16} \abs{\alpha_2}^{n_1}}{C_4 \alpha^{n_1}} + b^{m_2-m_1} \\
			&\leq C_{17} \max \setb{\left( \frac{\abs{\alpha_2}}{\alpha} \right)^{n_1}, b^{m_2-m_1}},
		\end{align*}
		where for the second inequality we have used \eqref{eq:relleadterm}.
		So in both subcases we have
		\begin{equation}
			\label{eq:s2c1upper}
			\abs{\frac{a \alpha^{n_2} (\alpha^{n_1-n_2} - 1)}{b^{m_1}} - 1} \leq C_{18} \max \setb{\left( \frac{\abs{\alpha_2}}{\alpha} \right)^{n_1}, b^{m_2-m_1}}.
		\end{equation}
		
		We aim for applying Proposition~\ref{prop:Matveev} to get a lower bound.
		Therefore let us set $ t=3 $, $ K = \QQ(a,\alpha) $, $ D = [K:\QQ] $ as well as 
	\begin{align*}
		\eta_1 &= a(\alpha^{n_1-n_2} - 1), \quad &
		\eta_2 &= \alpha, \quad &
		\eta_3 &= b, \\
		b_1 &= 1, \quad &
		b_2 &= n_2, \quad &
		b_3 &=-m_1.
	\end{align*}			
		Using Inequality \eqref{eq:s2case1} gives us
		\begin{align*}
			h(\eta_1) &\leq h(a) + h(\alpha^{n_1-n_2} - 1) \leq h(a) + h(\alpha^{n_1-n_2}) + h(1) + \log 2 \\
			&\leq C_{19} + (n_1-n_2) h(\alpha) \leq C_{20} \log n_1 \log b
		\end{align*}
		and an analogous bound for $ \log \eta_1 $.
		Thus we can put
		\begin{align*}
			A_1 &= C_{21} \log n_1 \log b, \\
			A_2 &= \max \setb{Dh(\alpha), \log \alpha, 0.16}, \\
			A_3 &= D \log b, \\
			B &= n_1.
		\end{align*}
		Finally, we have to show that the expression
		\begin{equation*}
			\Lambda := a(\alpha^{n_1-n_2} - 1) \alpha^{n_2} b^{-m_1} - 1
		\end{equation*}
		is nonzero.
		Assuming the contrary, we would have $ a (\alpha^{n_1-n_2} - 1) \alpha^{n_2} = b^{m_1} $ and by applying $ \sigma $ the equality $ a (\alpha^{n_1-n_2} - 1) \alpha^{n_2} = \sigma(a) (\sigma(\alpha)^{n_1-n_2} - 1) \sigma(\alpha)^{n_2} $.
		Taking absolute values this implies
		\begin{equation*}
			\abs{\frac{\sigma(a) (\sigma(\alpha)^{n_1-n_2} - 1)}{a (\alpha^{n_1-n_2} - 1)}} = \left( \frac{\alpha}{\abs{\sigma(\alpha)}} \right)^{n_2},
		\end{equation*}
		which is a contradiction for $ n_2 \geq N_0 $ since the left hand side is bounded above by a constant.
		Hence we have $ \Lambda \neq 0 $.
		Now Proposition~\ref{prop:Matveev} states that
		\begin{equation*}
			\log \abs{\Lambda} \geq -C_{22} (1 + \log n_1) \log n_1 \log b \log b \geq -C_{23} (\log n_1 \log b)^2.
		\end{equation*}
		
		Comparing this lower bound with the upper bound \eqref{eq:s2c1upper} gives us
		\begin{equation*}
			-C_{23} (\log n_1 \log b)^2 \leq \log C_{18} + \log \max \setb{\left( \frac{\abs{\alpha_2}}{\alpha} \right)^{n_1}, b^{m_2-m_1}}
		\end{equation*}
		which immediately implies
		\begin{equation*}
			\min \setb{n_1, (m_1-m_2) \log b} \leq C_{24} (\log n_1 \log b)^2.
		\end{equation*}
		Then, depending on which of the two expressions is the minimum, we either have
		\begin{equation*}
			n_1 \leq C_{14} (\log n_1 \log b)^2
		\end{equation*}
		or
		\begin{equation*}
			\max \setb{n_1-n_2, (m_1-m_2) \log b} = (m_1-m_2) \log b \leq C_{13} (\log n_1 \log b)^2.
		\end{equation*}
		This concludes the first case.
		
		In the second case we assume that
		\begin{equation*}
			\min \setb{n_1-n_2, (m_1-m_2) \log b} = (m_1-m_2) \log b.
		\end{equation*}
		Thus we have
		\begin{equation}
			\label{eq:s2case2}
			(m_1-m_2) \log b \leq C_7 \log n_1 \log b.
		\end{equation}
		Inserting the Binet representation of the linear recurrence sequence into Equation~\eqref{eq:twosols} and regrouping terms yields
		\begin{align*}
			\abs{a \alpha^{n_1} - b^{m_2} (b^{m_1-m_2} - 1)} &= \abs{a \alpha^{n_2} + \sum_{l=2}^{k} a_l \alpha_l^{n_2} - \sum_{l=2}^{k} a_l \alpha_l^{n_1}} \\
			&\leq a \alpha^{n_2} + C_8 \abs{\alpha_2}^{n_2} + C_9 \abs{\alpha_2}^{n_1} \\
			&\leq C_{25} \alpha^{n_2} + C_9 \abs{\alpha_2}^{n_1},
		\end{align*}
		and, dividing both sides by $ b^{m_1} - b^{m_2} $, we get
		\begin{align}
			\abs{\frac{a \alpha^{n_1}}{b^{m_2} (b^{m_1-m_2} - 1)} - 1} &\leq \frac{C_{25} \alpha^{n_2} + C_9 \abs{\alpha_2}^{n_1}}{b^{m_1} - b^{m_2}} \nonumber \\
			&\leq \frac{C_{25} \alpha^{n_2} + C_9 \abs{\alpha_2}^{n_1}}{\frac{1}{2} b^{m_1}} \nonumber \\
			&\leq \frac{C_{25} \alpha^{n_2} + C_9 \abs{\alpha_2}^{n_1}}{\frac{1}{2} C_4 \alpha^{n_1}} \nonumber \\
			&\leq C_{26} \alpha^{n_2-n_1} + C_{27} \abs{\alpha_2}^{n_1-n_2} \alpha^{n_2-n_1} \nonumber \\
			\label{eq:s2c2upper}
			&\leq C_{28} \max \setb{\alpha^{n_2-n_1}, \left( \frac{\alpha}{\abs{\alpha_2}} \right)^{n_2-n_1}},
		\end{align}
		where we have used the inequalities $ b^{m_1} - b^{m_2} \geq \frac{1}{2} b^{m_1} $ and \eqref{eq:relleadterm}.
		
		We aim for applying Proposition~\ref{prop:Matveev} to get a lower bound.
		Therefore let us set $ t=3 $, $ K = \QQ(a,\alpha) $, $ D = [K:\QQ] $ as well as
\begin{align*}
\eta_1 &= (b^{m_1-m_2} - 1)/a, \quad &
\eta_2 &= \alpha, \quad &
\eta_3 &= b, \\
b_1 &= -1, \quad &
b_2 &= n_1, \quad &
b_3 &= -m_2.
\end{align*}		
		Using Inequality \eqref{eq:s2case2} as well as $ h(b) = \log b $ gives us
		\begin{align*}
			h(\eta_1) &\leq h(a) + h(b^{m_1-m_2} - 1) \leq h(a) + h(b^{m_1-m_2}) + h(1) + \log 2 \\
			&\leq C_{29} + (m_1-m_2) h(b) \leq C_{30} \log n_1 \log b
		\end{align*}
		and an analogous bound for $ \log \eta_1 $.
		Thus we can put
		\begin{align*}
			A_1 &= C_{31} \log n_1 \log b, \\
			A_2 &= \max \setb{Dh(\alpha), \log \alpha, 0.16}, \\
			A_3 &= D \log b, \\
			B &= n_1.
		\end{align*}
		Finally, we have to show that the expression
		\begin{equation*}
			\Lambda := \alpha^{n_1} b^{-m_2} ((b^{m_1-m_2} - 1)/a)^{-1} - 1
		\end{equation*}
		is nonzero.
		Assuming the contrary, we would have $ a \alpha^{n_1} = b^{m_1} - b^{m_2} $ and by applying $ \sigma $ we would get the equality $ a \alpha^{n_1} = \sigma(a) \sigma(\alpha)^{n_1} $.
		Taking absolute values this implies
		\begin{equation*}
			\frac{\abs{\sigma(a)}}{a} = \left( \frac{\alpha}{\abs{\sigma(\alpha)}} \right)^{n_1},
		\end{equation*}
		which is a contradiction for $ n_1 \geq N_0 $.
		Hence we have $ \Lambda \neq 0 $.
		Now Proposition~\ref{prop:Matveev} states that
		\begin{equation*}
			\log \abs{\Lambda} \geq -C_{32} (1 + \log n_1) \log n_1 \log b \log b \geq -C_{33} (\log n_1 \log b)^2.
		\end{equation*}
		
		Comparing this lower bound with the upper bound \eqref{eq:s2c2upper} gives us
		\begin{equation*}
			-C_{33} (\log n_1 \log b)^2 \leq \log C_{28} + \log \max \setb{\alpha^{n_2-n_1}, \left( \frac{\alpha}{\abs{\alpha_2}} \right)^{n_2-n_1}}
		\end{equation*}
		which immediately implies
		\begin{equation*}
			\max \setb{n_1-n_2, (m_1-m_2) \log b} = n_1-n_2 \leq C_{13} (\log n_1 \log b)^2.
		\end{equation*}
		This concludes the proof of the lemma.
	\end{proof}
	
	\begin{mylemma}
		\label{lemma:step3}
		Assume that Equation \eqref{eq:centraleq} has at least two distinct solutions $ (n_1,m_1) $ and $ (n_2,m_2) $ with $ n_1 > n_2 $ as considered in Theorem \ref{thm:mainthm}.
		Then we have
		\begin{equation*}
			n_1 \leq C_6 (\log n_1 \log b)^3.
		\end{equation*}
	\end{mylemma}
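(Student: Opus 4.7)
The plan is to exploit the two parallel bounds furnished by Lemma~\ref{lemma:step2}. The case $n_1\leq C_{14}(\log n_1\log b)^2$ already implies a stronger bound than the one claimed, so I only need to treat the alternative in which both
\[
n_1-n_2\leq C_{13}(\log n_1\log b)^2
\quad\text{and}\quad
(m_1-m_2)\log b\leq C_{13}(\log n_1\log b)^2
\]
hold simultaneously. In this regime the aim is to produce a new linear form in three logarithms that isolates $n_1$ itself (rather than $n_1-n_2$), by \emph{simultaneously} factoring the dominant terms on both sides of Equation~\eqref{eq:twosols}.

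Concretely, substituting the Binet representation into $U_{n_1}-U_{n_2}=b^{m_1}-b^{m_2}$ and separating dominant and subdominant contributions yields an inequality of the shape
\[
\left|a\alpha^{n_2}(\alpha^{n_1-n_2}-1)-b^{m_2}(b^{m_1-m_2}-1)\right|
\leq C\,\abs{\alpha_2}^{n_1}.
\]
Dividing through by $b^{m_2}(b^{m_1-m_2}-1)$ and using~\eqref{eq:relleadterm} together with $b^{m_1}-b^{m_2}\geq\tfrac{1}{2}b^{m_1}$, I expect an upper bound
\[
|\Lambda|:=\left|\frac{a(\alpha^{n_1-n_2}-1)}{b^{m_1-m_2}-1}\,\alpha^{n_2}b^{-m_2}-1\right|
\leq C\max\setb{\alpha^{-n_1},(\abs{\alpha_2}/\alpha)^{n_1}},
\]
so that $\log|\Lambda|\leq C-c\,n_1$ for some $c>0$ depending only on the recurrence.

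For the lower bound I would apply Proposition~\ref{prop:Matveev} with $t=3$, $\eta_1=a(\alpha^{n_1-n_2}-1)/(b^{m_1-m_2}-1)$, $\eta_2=\alpha$, $\eta_3=b$, $b_1=1$, $b_2=n_2$, $b_3=-m_2$. The step that makes the whole strategy work is the height estimate for $\eta_1$: using the standard properties of $h$ together with the two bounds from Lemma~\ref{lemma:step2}, both $h(\alpha^{n_1-n_2}-1)$ and $h(b^{m_1-m_2}-1)$ are controlled by a constant times $(\log n_1\log b)^2$, so $A_1=C(\log n_1\log b)^2$, while $A_2$ is a constant, $A_3=D\log b$ and $B=n_1$. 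Matveev then delivers $\log|\Lambda|\geq -C_6'(\log n_1\log b)^3$, and combining this with the upper bound gives $n_1\leq C_6(\log n_1\log b)^3$.

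The only remaining subtlety is the nonvanishing of $\Lambda$, handled in the style of Lemmas~\ref{lemma:step1} and~\ref{lemma:step2}: if $\Lambda=0$, then $a\alpha^{n_2}(\alpha^{n_1-n_2}-1)=b^{m_2}(b^{m_1-m_2}-1)$, and applying the fixed Galois automorphism $\sigma$ (which fixes $b\in\ZZ$) produces a ratio identity whose left-hand side is bounded by a constant while the right-hand side equals $(\alpha/\abs{\sigma(\alpha)})^{n_2}\to\infty$, contradicting $n_2\geq N_0$ after possibly updating $N_0$. The main obstacle is not any single computation but the simultaneous factorisation trick: factoring only one of the two differences would either fail to isolate $n_1$ on the algebraic side, or would leave an $\eta_1$ whose height is not controlled by Lemma~\ref{lemma:step2}.
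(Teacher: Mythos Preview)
Your proposal is correct and follows essentially the same approach as the paper: the same simultaneous factorisation of both sides of \eqref{eq:twosols}, the same choice of $\eta_1=a(\alpha^{n_1-n_2}-1)/(b^{m_1-m_2}-1)$, $\eta_2=\alpha$, $\eta_3=b$ in Matveev's theorem with the height of $\eta_1$ controlled via Lemma~\ref{lemma:step2}, and the same Galois argument for $\Lambda\neq 0$. The only cosmetic difference is that the paper keeps track of the two subcases $|\alpha_2|\leq 1$ and $|\alpha_2|>1$ explicitly (writing the error as $C_8|\alpha_2|^{n_2}+C_9|\alpha_2|^{n_1}$ rather than $C|\alpha_2|^{n_1}$), but your final upper bound $\max\{\alpha^{-n_1},(|\alpha_2|/\alpha)^{n_1}\}$ already accounts for both.
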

	
	\begin{proof}
		By Lemma \ref{lemma:step2} we can assume that
		\begin{equation}
			\label{eq:s2bound}
			\max \setb{n_1-n_2, (m_1-m_2) \log b} \leq C_{13} (\log n_1 \log b)^2
		\end{equation}
		since the other case is trivial.
		Inserting the Binet representation of the linear recurrence sequence into Equation \eqref{eq:twosols} and regrouping terms yields
		\begin{align*}
			\abs{a \alpha^{n_2} (\alpha^{n_1-n_2} - 1) - b^{m_2} (b^{m_1-m_2} - 1)} &= \abs{\sum_{l=2}^{k} a_l \alpha_l^{n_2} - \sum_{l=2}^{k} a_l \alpha_l^{n_1}} \\
			&\leq C_8 \abs{\alpha_2}^{n_2} + C_9 \abs{\alpha_2}^{n_1}.
		\end{align*}
		Dividing both sides by $ b^{m_1} - b^{m_2} $ then gives us
		\begin{align*}
			\abs{\frac{a \alpha^{n_2} (\alpha^{n_1-n_2} - 1)}{b^{m_2} (b^{m_1-m_2} - 1)} - 1} &\leq \frac{C_8 \abs{\alpha_2}^{n_2} + C_9 \abs{\alpha_2}^{n_1}}{b^{m_1} - b^{m_2}} \\
			&\leq \frac{C_8 \abs{\alpha_2}^{n_2} + C_9 \abs{\alpha_2}^{n_1}}{\frac{1}{2} b^{m_1}} \\
			&\leq \frac{C_8 \abs{\alpha_2}^{n_2} + C_9 \abs{\alpha_2}^{n_1}}{\frac{1}{2} C_4 \alpha^{n_1}} \\
			&\leq C_{34} \max \setb{\alpha^{-n_1}, \left( \frac{\alpha}{\abs{\alpha_2}} \right)^{-n_1}},
		\end{align*}
		where we have used the inequalities $ b^{m_1} - b^{m_2} \geq \frac{1}{2} b^{m_1} $ and \eqref{eq:relleadterm}.
		Note that the maximum occurs in view of the distinction between $ \abs{\alpha_2} \leq 1 $ and $ \abs{\alpha_2} > 1 $.
		
		We aim for applying Proposition~\ref{prop:Matveev} to get a lower bound.
		Therefore let us set $ t=3 $, $ K = \QQ(a,\alpha) $, $ D = [K:\QQ] $ as well as 
\begin{align*}
\eta_1 &= \frac{a(\alpha^{n_1-n_2} - 1)}{b^{m_1-m_2} - 1}, \quad &
\eta_2 &= \alpha, \quad &
\eta_3 &= b,\\
b_1 &= 1, \quad &
b_2 &= n_2, \quad &
b_3 &= -m_2.
\end{align*}		
		Using the bound \eqref{eq:s2bound} as well as $ h(b) = \log b $ gives us
		\begin{align*}
			h(\eta_1) &\leq h(a) + h(\alpha^{n_1-n_2} - 1) + h(b^{m_1-m_2} - 1) \\
			&\leq h(a) + h(\alpha^{n_1-n_2}) + h(1) + \log 2 + h(b^{m_1-m_2}) + h(1) + \log 2 \\
			&\leq C_{35} + (n_1-n_2) h(\alpha) + (m_1-m_2) h(b) \leq C_{36} (\log n_1 \log b)^2
		\end{align*}
		and an analogous bound for $ \abs{\log \eta_1} $.
		Thus we can put
		\begin{align*}
			A_1 &= C_{37} (\log n_1 \log b)^2, \\
			A_2 &= \max \setb{Dh(\alpha), \log \alpha, 0.16}, \\
			A_3 &= D \log b, \\
			B &= n_1.
		\end{align*}
		Finally, we have to show that the expression
		\begin{equation*}
			\Lambda := \alpha^{n_2} b^{-m_2} a(\alpha^{n_1-n_2} - 1)/(b^{m_1-m_2} - 1) - 1
		\end{equation*}
		is nonzero.
		Assuming the contrary, we would have $ a (\alpha^{n_1-n_2} - 1) \alpha^{n_2} = b^{m_1} - b^{m_2} $ and by applying $ \sigma $ we get the equality $ a (\alpha^{n_1-n_2} - 1) \alpha^{n_2} = \sigma(a) (\sigma(\alpha)^{n_1-n_2} - 1) \sigma(\alpha)^{n_2} $.
		Taking absolute values this implies
		\begin{equation*}
			\abs{\frac{\sigma(a) (\sigma(\alpha)^{n_1-n_2} - 1)}{a (\alpha^{n_1-n_2} - 1)}} = \left( \frac{\alpha}{\abs{\sigma(\alpha)}} \right)^{n_2},
		\end{equation*}
		which is a contradiction for $ n_2 \geq N_0 $ since the left hand side is bounded above by a constant.
		Hence we have $ \Lambda \neq 0 $.
		Now Proposition~\ref{prop:Matveev} states that
		\begin{equation*}
			\log \abs{\Lambda} \geq -C_{38} (1 + \log n_1) (\log n_1 \log b)^2 \log b \geq -C_{39} (\log n_1 \log b)^3.
		\end{equation*}
		
		Comparing this lower bound with the upper bound coming from the first paragraph of this proof gives us
		\begin{equation*}
			-C_{39} (\log n_1 \log b)^3 \leq \log C_{34} + \log \max \setb{\alpha^{-n_1}, \left( \frac{\alpha}{\abs{\alpha_2}} \right)^{-n_1}}
		\end{equation*}
		which immediately implies
		\begin{equation*}
			n_1 \leq C_6 (\log n_1 \log b)^3.
		\end{equation*}
		Thus the lemma is proven.
	\end{proof}
	
	We have now reached our first milestone, the bound \eqref{eq:logbound} is proven.
	The second milestone is to prove that if there are at least three distinct solutions $ (n_1,m_1) $, $ (n_2,m_2) $ and $ (n_3,m_3) $, then for the largest one we have
	\begin{equation}
		\label{eq:bbound}
		\log b \leq C_{40} (\log n_1)^2.
	\end{equation}
	Again we will split this part into some lemmas.
	
	\begin{mylemma}
		\label{lemma:step4}
		Assume that Equation \eqref{eq:centraleq} has at least two distinct solutions $ (n_1,m_1) $ and $ (n_2,m_2) $ with $ n_1 > n_2 $ as considered in Theorem \ref{thm:mainthm}.
		Then we have
		\begin{equation*}
			n_1 \log \alpha - C_{41} \leq m_1 \log b \leq n_1 \log \alpha + C_{42}.
		\end{equation*}
	\end{mylemma}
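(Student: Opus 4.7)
The plan is extremely short: the lemma is just a restatement of Inequality \eqref{eq:relleadterm} in additive form. Recall that \eqref{eq:relleadterm} was already established near the beginning of the section and reads
\[
	C_4 \alpha^{n_1} \leq b^{m_1} \leq C_5 \alpha^{n_1},
\]
and it holds whenever there exists at least one further, smaller solution $(n_2,m_2)$. This is exactly the hypothesis of the lemma, so the bound is available to us.

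First I would take logarithms of both sides of this double inequality, yielding
\[
	n_1 \log \alpha + \log C_4 \leq m_1 \log b \leq n_1 \log \alpha + \log C_5.
\]
Then I would simply define $C_{41} := \abs{\log C_4}$ and $C_{42} := \abs{\log C_5}$, both of which are absolute constants depending only on the recurrence $(U_n)_{n\in \NN}$ (through the quantities $\varepsilon$, $C_1$, $C_2$, $a$, $\alpha$, and the chosen $\sigma$). Since $\log C_4 \geq -\abs{\log C_4} = -C_{41}$ and $\log C_5 \leq C_{42}$, the desired inequality
\[
	n_1 \log \alpha - C_{41} \leq m_1 \log b \leq n_1 \log \alpha + C_{42}
\]
follows at once.

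There is no substantive obstacle here: everything rests on \eqref{eq:relleadterm}, which in turn was derived by a direct comparison of $U_{n_1}-U_{n_2}$ with the Binet main term and with $b^{m_1}-b^{m_2}$. Hence the entire proof of Lemma~\ref{lemma:step4} amounts to one line of computation.
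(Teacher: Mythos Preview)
Your proof is correct and matches the paper's own argument exactly: the paper's proof is the single sentence ``This follows immediately by applying the logarithm to Inequality \eqref{eq:relleadterm} from above.'' Your additional remark on setting $C_{41}=\abs{\log C_4}$ and $C_{42}=\abs{\log C_5}$ is a harmless elaboration of what the paper leaves implicit.
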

	
	\begin{proof}
		This follows immediately by applying the logarithm to Inequality \eqref{eq:relleadterm} from above.
	\end{proof}
	
	\begin{mylemma}
		\label{lemma:step5}
		Assume that Equation \eqref{eq:centraleq} has at least three distinct solutions $ (n_1,m_1) $, $ (n_2,m_2) $ and $ (n_3,m_3) $ with $ n_1 > n_2 > n_3 $ as considered in Theorem \ref{thm:mainthm}.
		Then at least one of the following inequalities holds:
		\begin{enumerate}[(i)]
			\item $ \log b \leq C_{43} \log n_1 $,
			\item $ n_2-n_3 \leq C_{44} \log n_1 $.
		\end{enumerate}
	\end{mylemma}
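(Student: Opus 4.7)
The plan is to construct a linear form in only two logarithms---namely $\log a$ and $\log \alpha$---whose Matveev lower bound scales as $-C \log n_1$ (rather than $-C \log n_1 \log b$), since the heights of $a$ and $\alpha$ are fixed constants independent of $b$. To do so, I would first apply the chain of estimates from the proof of Lemma~\ref{lemma:step1} to both solution pairs $(n_1,m_1),(n_2,m_2)$ and $(n_2,m_2),(n_3,m_3)$, obtaining $b^{m_1} = a\alpha^{n_1}(1+\epsilon_1)$ and $b^{m_2} = a\alpha^{n_2}(1+\epsilon_2)$, where each $\epsilon_i$ is bounded by an exponentially small quantity involving a maximum over the three natural terms $\alpha^{n_{i+1}-n_i}$, $(\alpha/\abs{\alpha_2})^{n_{i+1}-n_i}$, $b^{m_{i+1}-m_i}$.

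I would then exploit the trivial identity $(b^{m_1})^{m_2} = (b^{m_2})^{m_1}$: raising the two approximations to the $m_2$-th and $m_1$-th powers respectively and dividing eliminates $b$ altogether, producing
\[
	a^{m_2-m_1}\,\alpha^{n_1 m_2 - n_2 m_1} \;=\; \frac{(1+\epsilon_2)^{m_1}}{(1+\epsilon_1)^{m_2}}.
\]
For $n_1$ large enough so that $m_1 \max(\abs{\epsilon_1},\abs{\epsilon_2}) < 1/2$, the right-hand side lies within $C m_1 \max(\abs{\epsilon_1},\abs{\epsilon_2})$ of $1$. Nonvanishing of the associated quantity $\Lambda := a^{m_2-m_1}\alpha^{n_1 m_2 - n_2 m_1}-1$ follows from the multiplicative independence of $a$ and $\alpha$ together with $m_1 > m_2$. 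Since the integer coefficients are bounded by $2n_1^2$, Proposition~\ref{prop:Matveev} applied with $t=2$ (where $A_1, A_2$ are constants) yields the lower bound $\abs{\Lambda} \geq n_1^{-C}$.

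Combining the upper and lower bounds forces at least one of the six terms appearing in the bounds for $\abs{\epsilon_1}$ and $\abs{\epsilon_2}$ to exceed a fixed negative power of $n_1$. A case analysis then yields the conclusion: if a term involving $n_2-n_3$ dominates (either $\alpha^{-(n_2-n_3)}$ or $(\alpha/\abs{\alpha_2})^{-(n_2-n_3)}$), we get (ii) directly since both $\alpha$ and $\alpha/\abs{\alpha_2}$ exceed $1$; if a $b$-term dominates (involving either $m_1-m_2$ or $m_2-m_3$), then $(m_i-m_{i+1})\log b \leq C\log n_1$, which combined with $m_i-m_{i+1}\geq 1$ gives (i).

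The main obstacle is the apparently awkward case where a term involving $n_1-n_2$ dominates, since this forces only $n_1-n_2 \leq C\log n_1$, which matches neither (i) nor (ii) directly. The trick is to invoke Lemma~\ref{lemma:step4} applied to both pairs: it translates the bound on $n_1-n_2$ into a comparable bound
\[
	(m_1-m_2)\log b \leq (n_1-n_2)\log\alpha + C_{41} + C_{42} \leq C\log n_1,
\]
and since $m_1-m_2 \geq 1$ this again gives conclusion (i). The only other non-obvious point is recognising that the two solution pairs must be combined multiplicatively via the $b$-cancellation to produce a two-logarithm form in the first place; once this is done, the rest is routine linear-forms-in-logarithms bookkeeping together with case analysis.
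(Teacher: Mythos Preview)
Your proposal is correct and follows essentially the same approach as the paper. The paper phrases the $b$-elimination additively---defining $\Lambda_{12}=n_1\log\alpha-m_1\log b+\log a$ and $\Lambda_{23}=n_2\log\alpha-m_2\log b+\log a$ and forming $\Lambda=m_2\Lambda_{12}-m_1\Lambda_{23}$---whereas you phrase it multiplicatively via $(b^{m_1})^{m_2}=(b^{m_2})^{m_1}$; these are the same operation, yielding the identical two-logarithm form $(m_2n_1-m_1n_2)\log\alpha+(m_2-m_1)\log a$. The only cosmetic difference is that the paper explicitly separates off the cases $|\Lambda_{12}|>1$ and $|\Lambda_{23}|>1$ (via Lemma~\ref{lemma:linbig}), while you bundle this into the assumption $m_1\max(|\epsilon_1|,|\epsilon_2|)<1/2$; failure of that assumption already forces one of the six terms to exceed $C/n_1$, so the ensuing case analysis (including your use of Lemma~\ref{lemma:step4} for the $n_1-n_2$ case) covers it just as in the paper.
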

	
	\begin{proof}
		Let us recall Inequality \eqref{eq:chain_step1} from the proof of Lemma \ref{lemma:step1}, where we obtained
		\begin{equation*}
			\abs{\frac{a \alpha^{n_1}}{b^{m_1}} - 1} \leq C_{10} \max \setb{\alpha^{n_2-n_1}, \left( \frac{\alpha}{\abs{\alpha_2}} \right)^{n_2-n_1}, b^{m_2 - m_1}}.
		\end{equation*}
		Now we define the linear form
		\begin{equation*}
			\Lambda_{12} := n_1 \log \alpha - m_1 \log b + \log a = \log \frac{a \alpha^{n_1}}{b^{m_1}}
		\end{equation*}
		and distinguish between two cases.
		Let us first assume that $ \abs{\Lambda_{12}} > 1 $.
		Then by Lemma \ref{lemma:linbig} we have
		\begin{equation*}
			\frac{3}{5} \leq \abs{e^{\Lambda_{12}} - 1} \leq C_{10} \max \setb{\alpha^{n_2-n_1}, \left( \frac{\alpha}{\abs{\alpha_2}} \right)^{n_2-n_1}, b^{m_2 - m_1}}.
		\end{equation*}
		If the maximum is either $ \alpha^{n_2-n_1} $ or $ \left( \frac{\alpha}{\abs{\alpha_2}} \right)^{n_2-n_1} $, this implies an upper bound $ n_1-n_2 \leq C_{45} $.
		We will come back to this later.
		If the maximum is $ b^{m_2 - m_1} $, then the inequality implies
		\begin{equation*}
			\log b \leq (m_1-m_2) \log b \leq C_{46}
		\end{equation*}
		and we are done.
		Therefore we will now assume that $ \abs{\Lambda_{12}} \leq 1 $.
		
		Since we are now working with three solutions, we analogously get the upper bound
		\begin{equation*}
			\abs{\frac{a \alpha^{n_2}}{b^{m_2}} - 1} \leq C_{10} \max \setb{\alpha^{n_3-n_2}, \left( \frac{\alpha}{\abs{\alpha_2}} \right)^{n_3-n_2}, b^{m_3 - m_2}}
		\end{equation*}
		and define the linear form
		\begin{equation*}
			\Lambda_{23} := n_2 \log \alpha - m_2 \log b + \log a = \log \frac{a \alpha^{n_2}}{b^{m_2}}.
		\end{equation*}
		Once again we distinguish between two cases and assume first that $ \abs{\Lambda_{23}} > 1 $.
		Then by Lemma \ref{lemma:linbig} we have
		\begin{equation*}
			\frac{3}{5} \leq \abs{e^{\Lambda_{23}} - 1} \leq C_{10} \max \setb{\alpha^{n_3-n_2}, \left( \frac{\alpha}{\abs{\alpha_2}} \right)^{n_3-n_2}, b^{m_3 - m_2}}.
		\end{equation*}
		If the maximum is either $ \alpha^{n_3-n_2} $ or $ \left( \frac{\alpha}{\abs{\alpha_2}} \right)^{n_3-n_2} $, this implies an upper bound $ n_2-n_3 \leq C_{45} $.
		If the maximum is $ b^{m_3 - m_2} $, then the inequality implies
		\begin{equation*}
			\log b \leq (m_3-m_2) \log b \leq C_{46}.
		\end{equation*}
		In both situations we are done.
		Therefore we will now assume that $ \abs{\Lambda_{23}} \leq 1 $.
		
		As we have $ \abs{\Lambda_{12}} \leq 1 $ as well as $ \abs{\Lambda_{23}} \leq 1 $, we can apply Lemma \ref{lemma:linsmall} to both linear forms, which yields
		\begin{align*}
			\abs{\Lambda_{12}} &\leq 4 \abs{e^{\Lambda_{12}} - 1} \leq C_{47} \max \setb{\alpha^{n_2-n_1}, \left( \frac{\alpha}{\abs{\alpha_2}} \right)^{n_2-n_1}, b^{m_2 - m_1}}, \\
			\abs{\Lambda_{23}} &\leq 4 \abs{e^{\Lambda_{23}} - 1} \leq C_{47} \max \setb{\alpha^{n_3-n_2}, \left( \frac{\alpha}{\abs{\alpha_2}} \right)^{n_3-n_2}, b^{m_3 - m_2}}.
		\end{align*}
		As the next step we define a further linear form by
		\begin{equation*}
			\Lambda := m_2 \Lambda_{12} - m_1 \Lambda_{23} = (m_2 n_1 - m_1 n_2) \log \alpha + (m_2 - m_1) \log a.
		\end{equation*}
		Using the upper bounds for $ \abs{\Lambda_{12}} $ and $ \abs{\Lambda_{23}} $ from above we get
		\begin{align}
			\abs{\Lambda} &\leq m_2 \abs{\Lambda_{12}} + m_1 \abs{\Lambda_{23}} \nonumber \\
			\label{eq:s5bound}
			&\leq C_{48} n_1 \max \left( \alpha^{n_2-n_1}, \left( \frac{\alpha}{\abs{\alpha_2}} \right)^{n_2-n_1}, b^{m_2 - m_1}, \right. \\
			&\hspace{4cm} \left. \alpha^{n_3-n_2}, \left( \frac{\alpha}{\abs{\alpha_2}} \right)^{n_3-n_2}, b^{m_3 - m_2} \right). \nonumber
		\end{align}
		
		We aim for applying Proposition \ref{prop:Matveev} to get a lower bound.
		Therefore let us set $ t=2 $, $ K = \QQ(a,\alpha) $, $ D = [K:\QQ] $ as well as 
\begin{align*}
\eta_1 &= \alpha, \quad &
\eta_2 &= a,\\
b_1 &= m_2 n_1 - m_1 n_2, \quad &
b_2 &= m_2 - m_1.
\end{align*}		
		Further we can put
		\begin{align*}
			A_1 &= \max \setb{Dh(\alpha), \log \alpha, 0.16}, \\
			A_2 &= \max \setb{Dh(a), \abs{\log a}, 0.16}, \\
			B &= n_1^2.
		\end{align*}
		Finally, we have to show that $ \Lambda $ is nonzero.
		But this follows immediately from $ m_1 \neq m_2 $ and the assumption that $ \alpha $ and $ a $ are multiplicatively independent.
		Now Proposition \ref{prop:Matveev} states that
		\begin{equation*}
			\log \abs{\Lambda} \geq -C_{49} (1 + \log (n_1^2)) \geq -C_{50} \log (n_1^2) \geq -C_{51} \log n_1.
		\end{equation*}
		
		Comparing this lower bound with the upper bound \eqref{eq:s5bound} gives us
		\begin{align*}
			-C_{51} \log n_1 &\leq \log C_{48} + \log n_1 + \log \max \left( \alpha^{n_2-n_1}, \left( \frac{\alpha}{\abs{\alpha_2}} \right)^{n_2-n_1}, b^{m_2 - m_1}, \right. \\
			&\hspace{5cm} \left. \alpha^{n_3-n_2}, \left( \frac{\alpha}{\abs{\alpha_2}} \right)^{n_3-n_2}, b^{m_3 - m_2} \right)
		\end{align*}
		which immediately implies
		\begin{equation}
			\label{eq:s5minbound}
			\min \setb{n_1-n_2, n_2-n_3, (m_1-m_2) \log b, (m_2-m_3) \log b} \leq C_{52} \log n_1.
		\end{equation}
		We have now to handle three cases.
		If the minimum in Inequality \eqref{eq:s5minbound} is either $ (m_1-m_2) \log b $ or $ (m_2-m_3) \log b $, then we get
		\begin{equation*}
			\log b \leq (m_i-m_{i+1}) \log b \leq C_{52} \log n_1
		\end{equation*}
		for an $ i \in \set{1,2} $ and are done.
		If the minimum in Inequality \eqref{eq:s5minbound} is $ n_2-n_3 $, we have case (ii) of the lemma.
		So it remains to consider the case when the minimum in Inequality \eqref{eq:s5minbound} is $ n_1-n_2 $.
		This can be handled together with the still open case $ n_1-n_2 \leq C_{45} $ from above.
		Hence let us assume that $ n_1-n_2 \leq C_{53} \log n_1 $.
		By Lemma \ref{lemma:step4} we get
		\begin{align*}
			C_{53} \log n_1 &\geq n_1-n_2 \\
			&\geq \frac{1}{\log \alpha} (m_1 \log b - C_{42}) - \frac{1}{\log \alpha} (m_2 \log b + C_{41}) \\
			&= \frac{1}{\log \alpha} \left( (m_1-m_2) \log b - C_{41} - C_{42} \right)
		\end{align*}
		and thus
		\begin{equation*}
			\log b \leq (m_1-m_2) \log b \leq C_{54} \log n_1
		\end{equation*}
		which concludes the proof of the lemma.
	\end{proof}
	
	\begin{mylemma}
		\label{lemma:step6}
		Assume that Equation \eqref{eq:centraleq} has at least three distinct solutions $ (n_1,m_1) $, $ (n_2,m_2) $ and $ (n_3,m_3) $ with $ n_1 > n_2 > n_3 $ as considered in Theorem \ref{thm:mainthm}.
		Then we have
		\begin{equation*}
			\log b \leq C_{40} (\log n_1)^2.
		\end{equation*}
	\end{mylemma}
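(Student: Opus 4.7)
The plan is to split on the two cases in the conclusion of Lemma \ref{lemma:step5}. Case (i) yields $\log b \leq C_{43} \log n_1 \leq C_{40} (\log n_1)^2$, so there is nothing to prove. All the work lies in case (ii), where $n_2 - n_3 \leq C_{44} \log n_1$.

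In case (ii), the strategy is to rerun the arguments of Lemmas \ref{lemma:step1}--\ref{lemma:step3} on the pair $(n_2, m_2), (n_3, m_3)$ in place of $(n_1, m_1), (n_2, m_2)$, exploiting the much sharper upper bound on $n_2 - n_3$ than on $n_1 - n_2$. Concretely, starting from $U_{n_2} - U_{n_3} = b^{m_2} - b^{m_3}$, inserting the Binet representation, factoring out $a\alpha^{n_3}$ on the left and $b^{m_3}$ on the right, and dividing, one obtains an inequality of the form
\begin{equation*}
	\abs{\frac{a\alpha^{n_3}(\alpha^{n_2-n_3}-1)}{b^{m_3}(b^{m_2-m_3}-1)}-1} \leq C \kappa^{n_2}
\end{equation*}
for some effectively computable $\kappa<1$ (roughly $\max(|\alpha_2|,1)/\alpha$). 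Via Lemma \ref{lemma:linsmall} this yields a small upper bound on the three-term linear form
\begin{equation*}
	\Lambda := \log \eta_1 + n_3 \log \alpha - m_3 \log b, \qquad \eta_1 := \frac{a(\alpha^{n_2-n_3}-1)}{b^{m_2-m_3}-1}.
\end{equation*}

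To apply Proposition \ref{prop:Matveev} to $\Lambda$, a good estimate for $h(\eta_1)$ is needed. One has $h(\eta_1) \leq C + (n_2-n_3) h(\alpha) + (m_2-m_3) \log b$, where the first two terms are $O(\log n_1)$ by case (ii). The term $(m_2-m_3) \log b$ is controlled by first running Lemma \ref{lemma:step2} on the pair $(n_2, m_2), (n_3, m_3)$: combined with $n_2 - n_3 \leq C_{44} \log n_1$, this yields either $n_2 \leq C(\log n_1 \log b)^2$ (which, together with Lemma \ref{lemma:step4} applied to the two pairs, already suffices) or $(m_2-m_3) \log b \leq C(\log n_1 \log b)^2$, so that $h(\eta_1) \leq C(\log n_1 \log b)^2$. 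Applying Matveev and comparing with the upper bound $\log|\Lambda| \leq -c_0 n_2 + O(1)$ produces a chain of inequalities which, after using Lemma \ref{lemma:step4} applied to both pairs $(1,2)$ and $(2,3)$ to convert differences of the $n_i$'s into differences of the $m_i\log b$'s and invoking $m_i - m_{i+1} \geq 1$, yield $\log b \leq C_{40}(\log n_1)^2$.

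The main obstacles are twofold: first, verifying $\Lambda \neq 0$, which is done (as in the previous lemmas) by applying the Galois automorphism $\sigma$ with $\abs{\sigma(\alpha)} < \alpha$ and deriving a contradiction for $n_3 \geq N_0$; second, the careful bookkeeping at the final step, where a surrogate for Lemma \ref{lemma:step4} is needed for the smallest solution $(n_3, m_3)$, to which \eqref{eq:relleadterm} does not directly apply. This surrogate is extracted from the identity $b^{m_3} = b^{m_2} - (U_{n_2}-U_{n_3})$ together with the known range $b^{m_2} \in [C_4, C_5]\alpha^{n_2}$, which with care gives enough control on $b^{m_3}$ to drive the final combination of bounds.
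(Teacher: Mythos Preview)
Your proposal contains a genuine gap. The linear form you apply Matveev to,
\[
\Lambda = \log \eta_1 + n_3 \log \alpha - m_3 \log b,\qquad \eta_1 = \frac{a(\alpha^{n_2-n_3}-1)}{b^{m_2-m_3}-1},
\]
still contains $\log b$, and moreover $h(\eta_1)$ carries the term $(m_2-m_3)\log b$. Even with your preparatory bound $(m_2-m_3)\log b \leq C(\log n_1 \log b)^2$ (or the sharper $C(\log n_1)^2\log b$ one actually gets from re-running Lemma~\ref{lemma:step2} with the input $n_2-n_3\leq C_{44}\log n_1$), Matveev's lower bound comes out as $\log|\Lambda|\geq -C(\log n_1)^k(\log b)^2$ for some $k\geq 1$, because both $A_1$ and $A_3$ contribute a factor $\log b$. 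Comparing with $\log|\Lambda|\leq -c_0 n_2+O(1)$ and then invoking Lemma~\ref{lemma:step4} only yields $\log b\leq C(\log n_1)^k(\log b)^2$, which is vacuous and does not bound $\log b$ in terms of $\log n_1$.

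The missing idea---and the whole point of the lemma---is to \emph{eliminate} $\log b$ from the linear form before applying Matveev, exactly as in Lemma~\ref{lemma:step5}. The paper keeps $\Lambda_{12}=n_1\log\alpha - m_1\log b + \log a$ from before, builds a second form $\widetilde{\Lambda_{23}}=n_3\log\alpha - m_2\log b + \log\bigl(a(\alpha^{n_2-n_3}-1)\bigr)$ (note: divide by $b^{m_2}$, not by $b^{m_2}-b^{m_3}$), and then takes the combination $\Lambda:=m_2\Lambda_{12}-m_1\widetilde{\Lambda_{23}}$, in which $\log b$ cancels. Now Matveev is applied with $\eta_1=\alpha$, $\eta_2=a$, $\eta_3=\alpha^{n_2-n_3}-1$; since $h(\eta_3)\leq C(n_2-n_3)\leq C\log n_1$ by case~(ii), one gets $\log|\Lambda|\geq -C(\log n_1)^2$ with no $\log b$ on the right, and the comparison then genuinely bounds $\log b$. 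A further point you miss: showing $\Lambda\neq 0$ for this $b$-free combination is precisely where the technical hypothesis on Equation~\eqref{eq:techcond} enters---the Galois argument alone does not suffice here.
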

	
	\begin{proof}
		From Lemma \ref{lemma:step5} we get that either (i) or (ii) holds.
		Since in the case (i) there is nothing to do, we may assume that we are in the case (ii) and therefore have the bound
		\begin{equation}
			\label{eq:s6bound}
			n_2-n_3 \leq C_{44} \log n_1.
		\end{equation}
		Recall the linear form
		\begin{equation*}
			\Lambda_{12} = n_1 \log \alpha - m_1 \log b + \log a
		\end{equation*}
		from the proof of Lemma \ref{lemma:step5}. Note that it is enough to consider the situation $ \abs{\Lambda_{12}} \leq 1 $, which by Lemma \ref{lemma:linsmall} implied
		\begin{equation*}
			\abs{\Lambda_{12}} \leq 4 \abs{e^{\Lambda_{12}} - 1} \leq C_{47} \max \setb{\alpha^{n_2-n_1}, \left( \frac{\alpha}{\abs{\alpha_2}} \right)^{n_2-n_1}, b^{m_2 - m_1}},
		\end{equation*}
		since the situation $ \abs{\Lambda_{12}} > 1 $ led to case (i).
		
		Taking a look at the proof of Lemma \ref{lemma:step2} and noting that we are working with three solutions, we recall from \eqref{eq:s2c1upper} the upper bound 
		\begin{equation*}
			\abs{\frac{a \alpha^{n_3} (\alpha^{n_2-n_3} - 1)}{b^{m_2}} - 1} \leq C_{18} \max \setb{\left( \frac{\abs{\alpha_2}}{\alpha} \right)^{n_2}, b^{m_3-m_2}}
		\end{equation*}
		and define the linear form
		\begin{equation*}
			\widetilde{\Lambda_{23}} := n_3 \log \alpha - m_2 \log b + \log (a(\alpha^{n_2-n_3} - 1)).
		\end{equation*}
		Again we distinguish between two cases and assume first that $ \abs{\widetilde{\Lambda_{23}}} > 1 $.
		Then by Lemma \ref{lemma:linbig} we have
		\begin{equation*}
			\frac{3}{5} \leq \abs{e^{\widetilde{\Lambda_{23}}} - 1} \leq C_{18} \max \setb{\left( \frac{\abs{\alpha_2}}{\alpha} \right)^{n_2}, b^{m_3-m_2}}.
		\end{equation*}
		If the maximum is $ \left( \frac{\abs{\alpha_2}}{\alpha} \right)^{n_2} $, this implies an upper bound $ n_2 \leq C_{55} $.
		We will come back to this later.
		If the maximum is $ b^{m_3 - m_2} $, then the inequality implies
		\begin{equation*}
			\log b \leq (m_2-m_3) \log b \leq C_{56}
		\end{equation*}
		and we are done.
		Therefore we will now assume that $ \abs{\widetilde{\Lambda_{23}}} \leq 1 $.
		In this situation we can apply Lemma \ref{lemma:linsmall} which yields
		\begin{equation*}
			\abs{\widetilde{\Lambda_{23}}} \leq 4 \abs{e^{\widetilde{\Lambda_{23}}} - 1} \leq C_{57} \max \setb{\left( \frac{\abs{\alpha_2}}{\alpha} \right)^{n_2}, b^{m_3-m_2}}.
		\end{equation*}
		
		In the next step we define a further linear form by
		\begin{align*}
			\Lambda :=\ &m_2 \Lambda_{12} - m_1 \widetilde{\Lambda_{23}} \\
			=\ &(m_2 n_1 - m_1 n_3) \log \alpha + (m_2 - m_1) \log a - m_1 \log (\alpha^{n_2-n_3}-1).
		\end{align*}
		Using the upper bounds for $ \abs{\Lambda_{12}} $ and $ \abs{\widetilde{\Lambda_{23}}} $ from above we get
		\begin{align}
			\abs{\Lambda} &\leq m_2 \abs{\Lambda_{12}} + m_1 \abs{\widetilde{\Lambda_{23}}} \nonumber \\
			\label{eq:s6linbound}
			&\leq C_{58} n_1 \max \setb{\alpha^{n_2-n_1}, \left( \frac{\alpha}{\abs{\alpha_2}} \right)^{n_2-n_1}, b^{m_2 - m_1}, \left( \frac{\abs{\alpha_2}}{\alpha} \right)^{n_2}, b^{m_3-m_2}}.
		\end{align}
		
		We aim for applying Proposition \ref{prop:Matveev} to get a lower bound.
		Therefore let us set $ t=3 $, $ K = \QQ(a,\alpha) $, $ D = [K:\QQ] $ as well as
\begin{align*}
\eta_1 &= \alpha, \quad &
\eta_2 &= a, \quad &
\eta_3 &= \alpha^{n_2-n_3}-1,\\
b_1 &= m_2 n_1 - m_1 n_3, \quad &
b_2 &= m_2 - m_1, \quad &
b_3 &= -m_1.
\end{align*}		
		Using the bound \eqref{eq:s6bound} gives us
		\begin{align*}
			h(\eta_3) &\leq h(\alpha^{n_2-n_3}) + h(1) + \log 2 = (n_2-n_3) h(\alpha) + h(1) + \log 2 \\
			&\leq C_{59} (n_2-n_3) \leq C_{60} \log n_1
		\end{align*}
		and an analogous bound for $ \abs{\log \eta_3} $.
		Thus we can put
		\begin{align*}
			A_1 &= \max \setb{Dh(\alpha), \log \alpha, 0.16}, \\
			A_2 &= \max \setb{Dh(a), \abs{\log a}, 0.16}, \\
			A_3 &= C_{61} \log n_1, \\
			B &= n_1^2.
		\end{align*}
		Finally, we have to show that $ \Lambda $ is nonzero.
		Assume the contrary.
		Since $ m_1 \neq m_2 $, this means that $ \alpha $, $ a $ and $ \alpha^{n_2-n_3}-1 $ are multiplicatively dependent.
		Thus there exist rational numbers $ x,y \in \QQ $ such that
		\begin{equation*}
			\alpha^{n_2-n_3} - 1 = a^x \alpha^y
		\end{equation*}
		because $ \alpha $ and $ a $ are multiplicatively independent by assumption.
		Then the linear form $ \Lambda $ becomes
		\begin{equation*}
			\Lambda = (m_2 n_1 - m_1 n_3 - m_1 y) \log \alpha + (m_2 - m_1 - m_1 x) \log a.
		\end{equation*}
		Again using that $ \alpha $ and $ a $ are multiplicatively independent by assumption, $ \Lambda = 0 $ in particular implies
		\begin{equation*}
			m_2 - m_1 - m_1 x = 0
		\end{equation*}
		which yields
		\begin{equation*}
			m_2 = m_1 (1+x)
		\end{equation*}
		and hence $ -1 < x < 0 $ since $ m_1 > m_2 \geq 1 $.
		But this was excluded in the theorem.
		Therefore we have $ \Lambda \neq 0 $.
		Now Proposition \ref{prop:Matveev} states that
		\begin{equation*}
			\log \abs{\Lambda} \geq -C_{62} (1 + \log (n_1^2)) \log n_1 \geq -C_{63} \log (n_1^2) \log n_1 \geq -C_{64} (\log n_1)^2.
		\end{equation*}
		
		Comparing this lower bound with the upper bound \eqref{eq:s6linbound} gives us
		\begin{align*}
			-C_{64} (\log n_1)^2 &\leq \log C_{58} + \log n_1 \\
			&\hspace{0.5cm}+ \log \max \setb{\alpha^{n_2-n_1}, \left( \frac{\alpha}{\abs{\alpha_2}} \right)^{n_2-n_1}, b^{m_2 - m_1}, \left( \frac{\abs{\alpha_2}}{\alpha} \right)^{n_2}, b^{m_3-m_2}}
		\end{align*}
		which immediately implies
		\begin{equation}
			\label{eq:s6minbound}
			\min \setb{n_1-n_2, n_2, (m_1-m_2) \log b, (m_2-m_3) \log b} \leq C_{65} (\log n_1)^2.
		\end{equation}
		We have now to handle three cases.
		If the minimum in Inequality \eqref{eq:s6minbound} is either $ (m_1-m_2) \log b $ or $ (m_2-m_3) \log b $, then we get
		\begin{equation*}
			\log b \leq (m_i-m_{i+1}) \log b \leq C_{65} (\log n_1)^2
		\end{equation*}
		for an $ i \in \set{1,2} $ and are done.
		If the minimum in Inequality \eqref{eq:s6minbound} is $ n_1-n_2 $, then we have
		\begin{equation*}
			n_1-n_2 \leq C_{65} (\log n_1)^2
		\end{equation*}
		which yields, by using Lemma \ref{lemma:step4}, analogously to the end of the proof of Lemma \ref{lemma:step5} the bound
		\begin{equation*}
			\log b \leq (m_1-m_2) \log b \leq C_{66} (\log n_1)^2
		\end{equation*}
		and we are done as well.
		So it remains to consider the case when the minimum in Inequality \eqref{eq:s6minbound} is $ n_2 $.
		This can be handled together with the still open case $ n_2 \leq C_{55} $ from above.
		Hence let us assume that $ n_2 \leq C_{67} (\log n_1)^2 $.
		By Lemma \ref{lemma:step4} we get
		\begin{equation*}
			C_{67} (\log n_1)^2 \geq n_2 \geq \frac{1}{\log \alpha} (m_2 \log b - C_{42}) \geq \frac{1}{\log \alpha} (\log b - C_{42})
		\end{equation*}
		and thus
		\begin{equation*}
			\log b \leq C_{68} (\log n_1)^2
		\end{equation*}
		which concludes the proof of the lemma.
	\end{proof}
	
	We have now reached our second milestone, the bound \eqref{eq:bbound} is proven.
	So if there are at least three distinct solutions $ (n_1,m_1) $, $ (n_2,m_2) $ and $ (n_3,m_3) $ with $ n_1 > n_2 > n_3 $ as considered in Theorem \ref{thm:mainthm}, then we have the two bounds \eqref{eq:logbound} and \eqref{eq:bbound}.
	Inserting \eqref{eq:bbound} into \eqref{eq:logbound} gives
	\begin{equation*}
		n_1 \leq C_{69} (\log n_1)^9
	\end{equation*}
	and therefore the absolute bound
	\begin{equation*}
		n_1 \leq C_{70}.
	\end{equation*}
	Now we insert this into Inequality \eqref{eq:bbound} and get
	\begin{equation*}
		b \leq C_{71}.
	\end{equation*}
	This proves Theorem \ref{thm:mainthm}.
	
\section{Proof of Theorem \ref{thm:Tribos}}\label{sec:Tribos}\label{sec:proofTribos}

Recall that the Tribonacci sequence is given by  $T_1=1, T_2=1, T_3=2$ and $T_{n}=T_{n-1}+T_{n-2}+T_{n-3}$ for $n\geq 4$.
Then one can compute the roots $\alpha,\beta,\gamma$ of $f(X)=X^3-X^2-X-1$ and the coefficients $a,b,c$ such that
\[
	T_n= a\alpha^n + b\beta^n + c\gamma^n.
\]
Let $\sigma$ be the Galois automorphism on the splitting field $K$ of $f$ that maps $\alpha \mapsto \beta$. It turns out that
\[
	a=\frac{1}{-\alpha^2 + 4\alpha -1}, \quad
	b=\sigma(a),\quad
	c=\sigma^2(a).
\]
We fix $\alpha,\beta, \gamma$ such that
\begin{align*}
	\alpha &\approx 1.839, \quad &
	\beta &\approx -0.42 + 0.61 i, \quad &
	\gamma &\approx -0.42 - 0.61 i,\\
	a &\approx 0.336, \quad &
	b &\approx -0.17 - 0.20 i, \quad &
	c &\approx -0.17 + 0.20i.
\end{align*}
Note that 
\[	
	|\beta|=|\gamma|\leq 0.74 
	\quad \text{and} \quad 
	|b|=|c|\leq 0.26, 
\]	
so we can estimate 
\begin{equation}\label{eq:trib:Tn}	
	T_n= a \alpha^n + L(0.52 \cdot 0.74^n).
\end{equation}
Here the $L$-notation means the following: For functions $f(n)$, $g(n)$ with $g(n)>0 $ for $n\geq 1$ we write
\[
	f(n) = L(g(n))
	\quad \text{if} \quad
	|f(n)|\leq g(n).
\]

We check that all assumptions in Theorem \ref{thm:mainthm} are fulfilled: First, $\alpha$ is indeed an irrational dominant root larger than 1 and $a>0$. 
Second, we check hat $a$ and $\alpha$ are multiplicatively independent.
Assume that they are not, then there exist nonzero integers $x,y$ such that $a^x \alpha^y=1$. Since $|a|<1$ and $|\alpha|>1$, the integers $x$ and $y$ must have the same sign. However, if we apply $\sigma$, we get $b^x\beta^y=1$, but since $|b|<1$ and $|\beta|<1$ this is impossible if $x$ and $y$ have the same sign. Therefore, $a$ and $\alpha$ are multiplicatively independent. Finally, we check that there are no unwanted solutions to \eqref{eq:techcond}. Assume that 
\[
	\alpha^z - 1 = a^x \alpha^y
\]	
with $ z \in \NN $, $ x,y \in \QQ $ and $ -1 < x < 0 $.
Then we have for the norms
\[
	N_{K/\QQ}(\alpha^z-1) 
	= N_{K/\QQ}(a^x \alpha^y)
	= N_{K/\QQ}(a)^x N_{K/\QQ}(\alpha)^y
	= (1/44)^x1^y
	=44^{-x}.
\]
Now since $\alpha$ and therefore $\alpha^z -1$ are algebraic integers, $N_{K/\QQ}(\alpha^z-1)=44^{-x}$ has to be an integer. But this is impossible for a rational $x$ between $-1$ and $0$.

Assume that we have three solutions $T_{n_1}-b^{m_1}=T_{n_2}-b^{m_2}=T_{n_3}-b^{m_3}=c$ with $n_1>n_2>n_3\geq 2$.
We do some preliminary estimations.

From \eqref{eq:trib:Tn} we have
\begin{align}\label{eq:trib:eqationL}
	a \alpha^{n_1} - b^{m_1} - (a \alpha^{n_2} - b^{m_2})\nonumber
	&= L(0.52 \cdot 0.74^{n_1}) + L(0.52 \cdot 0.74^{n_2})\\
	&= L(0.91 \cdot 0.74^{n_2})
\end{align}
and analogously
\begin{equation}\label{eq:trib:equationL2}
	a \alpha^{n_2} - b^{m_2} - (a \alpha^{n_3} - b^{m_3})
	= L(0.91 \cdot 0.74^{n_3}).
\end{equation}
Moreover, we have
\begin{multline*}
	b^{m_1} \nonumber
	\geq b^{m_1} - b^{m_2}
	= T_{n_1} - T_{n_2}
	=  a \alpha^{n_1} - a \alpha^{n_2} + L(0.91 \cdot 0.74^{n_2}) \\
	\geq a(1-\alpha^{-1})\alpha^{n_1} - 0.91 \cdot 0.74^{n_2}
	\geq 0.15 \alpha^{n_1} - 0.68.
\end{multline*}
Now note that on the one hand for $n_1 \geq 6$ we have $0.68/\alpha^{n_1} \leq 0.02$ and on the other hand $b^{m_1}\geq 2^2=4\geq 0.13 \alpha^{n_1}$ is trivially fulfilled for $n_1\leq 5$. Thus we have in any case
\begin{equation}\label{eq:trib:balpha}
	b^{m_1}\geq 0.13 \alpha^{n_1}
	\quad
	\text{and analogously}
	\quad
	b^{m_2}\geq 0.13 \alpha^{n_2}.
\end{equation}
Estimating from the other side, we have
\begin{align*}
	0.5 \alpha^{n_1}
	&\geq a \alpha^{n_1} + L(0.52\cdot 0.74^{n_1})
	= T_{n_1}
	\geq T_{n_1}-T_{n_2}\\
	&= b^{m_1}-b^{m_2}
	\geq 0.5 b^{m_1}
\end{align*}
and the same argument works for the second and third solution. So we have
\begin{equation}\label{eq:trib:alphab}
	\alpha^{n_1} \geq b^{m_1}
	\quad \text{and} \quad
	\alpha^{n_2} \geq b^{m_2}.
\end{equation}
In particular, \eqref{eq:trib:balpha} and \eqref{eq:trib:alphab} imply the bounds
\begin{equation}\label{eq:trib:nlogalphamlogb}
	\begin{split}
	&m_1 \log b
	\leq n_1 \log \alpha
	\leq m_1 \log b + 2.1,\\
	&m_2 \log b
	\leq n_2 \log \alpha
	\leq m_2 \log b + 2.1
	\end{split}
\end{equation}
and in particular
\[
	n_1 \geq m_1.
\]

\step{step:smallSols}{Small solutions:} First, we check that there are no solutions with $n_1\leq 150$. 
To that end we simply search for all differences of two Tribonacci numbers that can be written in the form $T_{n_1}-T_{n_2} = b^{m_1}-b^{m_2}$ with $b\geq 2$ and $m_1>m_2\geq 1$. 
With the help of Sage \cite{sagemath} this is not difficult (see \nameref{sec:appendix} for the code):
For each pair $2\leq n_2 < n_1 \leq 150$, we compute the prime factorisation of $T_{n_1}-T_{n_2}=p_1^{k_1}\cdots p_l^{k_l}$. 
Now we need to check if there exist $b\geq 2$ and $x,y\geq 1$ 
such that $p_1^{k_1}\cdots p_l^{k_l}=b^x(b^y-1)$. 
Since $\gcd(b^x,b^y-1)=1$, we can simply try 
$b=p_{i_1}^{k_{i_1}/d}\cdots p_{i_t}^{k_{i_t}/d}$ for each subset $\{p_{i_1},\ldots p_{i_t}\}\subset\{p_1,\ldots,p_l\}$ and $d=\gcd(k_{i_1},\ldots,k_{i_t})$, and check if $(T_{n_1}-T_{n_2})/b^d$ can be written in the form $(b^y -1)$.
It turns out that this only happens on 14 occasions:
\begin{align*}
T_{ 4 } - T_{ 3 } = 2 ^{ 1 } ( 2 ^{ 1 }-1), \quad c&= 0 = T_{ 4 } - 2 ^{ 2 } = T_{ 3 } - 2 ^{ 1 }; \\
T_{ 5 } - T_{ 2 } = 2 ^{ 1 } ( 2 ^{ 2 }-1), \quad c&= -1 = T_{ 5 } - 2 ^{ 3 } = T_{ 2 } - 2 ^{ 1 }; \\
T_{ 5 } - T_{ 2 } = 3 ^{ 1 } ( 3 ^{ 1 }-1), \quad c&= -2 = T_{ 5 } - 3 ^{ 2 } = T_{ 2 } - 3 ^{ 1 }; \\
T_{ 6 } - T_{ 2 } = 2 ^{ 2 } ( 2 ^{ 2 }-1), \quad c&= -3 = T_{ 6 } - 2 ^{ 4 } = T_{ 2 } - 2 ^{ 2 }; \\
T_{ 6 } - T_{ 5 } = 2 ^{ 1 } ( 2 ^{ 2 }-1), \quad c&= 5 = T_{ 6 } - 2 ^{ 3 } = T_{ 5 } - 2 ^{ 1 }; \\
T_{ 6 } - T_{ 5 } = 3 ^{ 1 } ( 3 ^{ 1 }-1), \quad c&= 4 = T_{ 6 } - 3 ^{ 2 } = T_{ 5 } - 3 ^{ 1 }; \\
T_{ 7 } - T_{ 4 } = 5 ^{ 1 } ( 5 ^{ 1 }-1), \quad c&= -1 = T_{ 7 } - 5 ^{ 2 } = T_{ 4 } - 5 ^{ 1 }; \\
T_{ 8 } - T_{ 3 } = 7 ^{ 1 } ( 7 ^{ 1 }-1), \quad c&= -5 = T_{ 8 } - 7 ^{ 2 } = T_{ 3 } - 7 ^{ 1 }; \\
T_{ 8 } - T_{ 7 } = 5 ^{ 1 } ( 5 ^{ 1 }-1), \quad c&= 19 = T_{ 8 } - 5 ^{ 2 } = T_{ 7 } - 5 ^{ 1 }; \\
T_{ 11 } - T_{ 3 } = 17 ^{ 1 } ( 17 ^{ 1 }-1), \quad c&= -15 = T_{ 11 } - 17 ^{ 2 } = T_{ 3 } - 17 ^{ 1 }; \\
T_{ 12 } - T_{ 4 } = 5 ^{ 3 } ( 5 ^{ 1 }-1), \quad c&= -121 = T_{ 12 } - 5 ^{ 4 } = T_{ 4 } - 5 ^{ 3 }; \\
T_{ 12 } - T_{ 7 } = 2 ^{ 5 } ( 2 ^{ 4 }-1), \quad c&= -8 = T_{ 12 } - 2 ^{ 9 } = T_{ 7 } - 2 ^{ 5 }; \\
T_{ 15 } - T_{ 11 } = 54 ^{ 1 } ( 54 ^{ 1 }-1), \quad c&= 220 = T_{ 15 } - 54 ^{ 2 } = T_{ 11 } - 54 ^{ 1 }; \\
T_{ 23 } - T_{ 12 } = 641 ^{ 1 } ( 641 ^{ 1 }-1), \quad c&= -137 = T_{ 23 } - 641 ^{ 2 } = T_{ 12 } - 641 ^{ 1 }.
\end{align*}
The only $c$ that appears twice is $c=-1$, but on the two occasions the $b$'s are distinct ($2$ and $5$). So there is no $c$ with three solutions with $n_1\leq 150$.
The computations only took a couple of minutes on a usual laptop. 
Let us from now on assume that
\[
	n_1>150.
\]

Finally, a little remark on notation: The constants in this section will be numbered starting with $C_{100}$ to set them apart from the previous constants.

\begin{mystep}\label{step:Step1}
From \eqref{eq:trib:eqationL} we obtain
\[
	|a \alpha^{n_1} - b^{m_1}|
	= |a \alpha^{n_2} - b^{m_2} + L(0.91 \cdot 0.74^{n_2})|
	\leq \max( a \alpha^{n_2}, b^{m_2} )
	\leq \alpha^{n_2},
\]
where for the first inequality we used the fact that $a\alpha^{n_2}$ and $b^{m_2}$ are both positive and larger than the $L$-terms and for the second estimation we used $a<1$ and \eqref{eq:trib:alphab}.
Dividing by $b^{m_1} \geq 0.13 \alpha^{n_1}$ (see \eqref{eq:trib:balpha}) we obtain
\begin{equation}\label{eq:trib:Lambda1}
	|\Lambda_1| 
	:= \left| \frac{a\alpha^{n_1}}{b^{m_1}}-1 \right|
	\leq \frac{\alpha^{n_2}}{0.13 \alpha^{n_1}}
	\leq 7.7 \alpha^{-(n_1-n_2)}.
\end{equation}
We check that $\Lambda_1 \neq 0$: If $\Lambda_1=0$, then $a \alpha^{n_1} =b^{m_1} \in \ZZ$, so an application of the Galois automorphism $\sigma$ does not change $a \alpha^{n_1}$, i.e.\ we get $a \alpha^{n_1} = \sigma (a \alpha^{n_1}) =b \beta^{n_1}$.
Taking absolute values and estimating we get $0.33 \cdot 1.83^{n_1} \leq |a \alpha^{n_1}| = |b \beta^{n_1}|\leq 0.26 \cdot 0.74^{n_1}$,
which is impossible.

Now we apply Proposition~\ref{prop:Matveev} with
$t=3$, $K=\QQ(a,\alpha)=\QQ(\alpha)$, $D=3=[K:\QQ]$ and
\begin{align*}
\eta_1 &= \alpha, \quad &
\eta_2 &= b, \quad &
\eta_3 &=a, \\
b_1 &= n_1, \quad &
b_2 &= -m_1, \quad &
b_3 &= 1.
\end{align*}
We put
\begin{align*}
	A_1&=0.7\geq \log \alpha = \max(D h(\alpha),\log \alpha,0.16),\\
	A_2&=3 \log b= \max(D h(b),\abs{\log b},0.16),\\
	A_3&= 3.8 \geq \log 44 =3 h(a)= \max(D h(a),\abs{\log a},0.16),\\
	B&=n_1.
\end{align*}
Thus we get that
\[
	\log|\Lambda_1|
	\geq - C_{100} \cdot \log n_1 \cdot \log b,
\]
where
\[
	C_{100} = 2.6 \cdot 10^{13}
	> 1.4 \cdot 30^6 \cdot 3^{4.5} \cdot 3^2 (1+ \log 3) \cdot 1.2 \cdot 0.7 \cdot 3 \cdot 3.8.
\]
Note that the factor $1.2$ comes from the estimate $1+\log B = 1+\log n_1 \leq 1.2 \log n_1$, since $n_1>150$.
Together with \eqref{eq:trib:Lambda1} this implies
\[
	-C_{100}\cdot \log n_1 \cdot \log b
	\leq \log 7.7 - (n_1-n_2)\log \alpha,
\]
which yields
\begin{equation}\label{eq:trib:boundn1-n2}
	(n_1-n_2)\log \alpha
	\leq C_{100}\cdot \log n_1 \cdot \log b.
\end{equation}
Here we omitted the term $\log 7.7$. We can do this because we estimated quite generously when computing the constant $C_{100}=2.6\cdot 10^{13}$.
\end{mystep}

\begin{mystep}{}\label{step:Step2}
From \eqref{eq:trib:eqationL} we obtain
\[
	\left| (a \alpha^{n_1} - a \alpha^{n_2}) - (b^{m_1} - b^{m_2}) \right|
	= L(0.91 \cdot 0.74^{n_2})
	\leq 0.5.
\]
Dividing by $b^{m_1} - b^{m_2} \geq 0.5 b^{m_1}$ we obtain
\begin{equation}\label{eq:trib:Lambda2}
	|\Lambda_2|
	:= \left| \frac{a \alpha^{n_2}(\alpha^{n_1-n_2}-1)}{b^{m_2} (b^{m_1-m_2}-1)}-1\right|
	\leq b^{-m_1}.
\end{equation}
We check that $\Lambda_2 \neq 0$: If $\Lambda_{2}=0$, then $a (\alpha^{n_1}-\alpha^{n_2}) \in \ZZ$, so we must have $a (\alpha^{n_1}-\alpha^{n_2})= \sigma (a (\alpha^{n_1}-\alpha^{n_2})) = b(\beta^{n_1}-\beta^{n_2})$. Taking absolute values and estimating we get
\begin{align*}
	0.15 \cdot 1.83^{n_1}	
	&\leq 0.33 \alpha^{n_1} (1-\alpha^{-1})
	\leq \left| a (\alpha^{n_1}-\alpha^{n_2}) \right|
	= \left| b (\beta^{n_1}-\beta^{n_2}) \right|\\
	&\leq 0.26 (|\beta|^{n_1} + |\beta|^{n_2})
	\leq 0.26 (0.74^{n_1} + 0.74^{n_2}),
\end{align*}
which is impossible for $n_1>150$.

Now we apply Proposition~\ref{prop:Matveev} just like in Step \ref{step:Step1}, except that now $b_1=n_2$, $b_2=-m_2$ and $\eta_3=\frac{a(\alpha^{n_1-n_2}-1)}{b^{m_1-m_2}-1}$. In order to find an $A_3$ we estimate the height of $\eta_3$:
\begin{align*}
h\left( \frac{a(\alpha^{n_1-n_2}-1)}{b^{m_1-m_2}-1} \right) 
	&\leq h(a) + h(\alpha^{n_1-n_2}-1) + h(b^{m_1-m_2}-1)\\
	&\leq \frac{1}{3} \log 44 + (n_1-n_2)h(\alpha) + \log 2 + \log (b^{m_1-m_2}-1)\\
	&\leq 1.96 + (n_1-n_2) \frac{\log \alpha}{3} + (m_1-m_2)\log b\\
	&\leq 1.96 + (n_1-n_2) \frac{\log \alpha}{3} +  (n_1-n_2)\log \alpha + 2.1\\
	&\leq 4.9 (n_1-n_2),
\end{align*}
where we used \eqref{eq:trib:nlogalphamlogb} to estimate $(m_1-m_2)\log b \leq (n_1-n_2)\log \alpha + 2.1$.

Thus we can set $A_3= 14.7 (n_1-n_2) \geq \max(D h(\eta_3), \abs{\log \eta_3}, 0.16)$ and we obtain analogously to the application of Proposition~\ref{prop:Matveev} in Step~\ref{step:Step1}
\[
	\log|\Lambda_2|
	\geq - C_{101} \cdot \log n_1 \cdot \log b \cdot (n_1-n_2),
\]
where
\[
	C_{101} = 1.1 \cdot 10^{14}
	> 1.4 \cdot 30^6 \cdot 3^{4.5} \cdot 3^2 (1+ \log 3) \cdot 1.2 \cdot 0.7 \cdot 3 \cdot 14.7.
\]
Together with \eqref{eq:trib:Lambda2} this implies
\[
	m_1 \log b 
	\leq C_{101} \cdot \log n_1 \cdot \log b \cdot (n_1-n_2).
\]
Using \eqref{eq:trib:nlogalphamlogb} and \eqref{eq:trib:boundn1-n2} from Step~\ref{step:Step1}, we obtain
\begin{multline*}
	n_1 \log \alpha
	\leq m_1 \log b + 2.1
	\leq C_{101} \cdot \log n_1 \cdot \log b \cdot (n_1-n_2)\\
	\leq C_{101} \cdot \log n_1 \cdot \log b \cdot (\log \alpha)^{-1} \cdot C_{100}\cdot \log n_1 \cdot \log b,
\end{multline*}
where we omitted the constant $ 2.1 $ because $C_{101}$ was estimated roughly.
Thus, we end up with
\begin{equation}\label{eq:trib:boundStep2}
	n_1 \leq C_{102} \cdot (\log n_1 \cdot \log b)^2,
\end{equation}
where
\[
	C_{102} = 7.8 \cdot 10^{27} > C_{100}\cdot C_{101} \cdot (\log \alpha)^{-2
}.
\]
\end{mystep}

\begin{mystep}\label{step:Step3}
Recall the bound \eqref{eq:trib:Lambda1}:
\[
	|\Lambda_1|
	=\left| \frac{a\alpha^{n_1}}{b^{m_1}}-1 \right|
	\leq 7.7 \alpha^{-(n_1-n_2)}.
\]
Assume for a moment that $|\Lambda_1|\geq 0.5$. Then $n_1-n_2 \leq 4$, we get that $\log b \leq m_1 \log b - m_2 \log b \leq (n_1-n_2)\log \alpha + 2.1 \leq 4.6$ and we can immediately skip to Step~\ref{step:Step6}.

Therefore, we may assume that $|\Lambda_1|< 0.5$. Since $\abs{\log x} \leq 2 |x-1|$ for $|x-1|<0.5$, we obtain
\[
	|\Lambda_1'|
	:= \abs{\log a + n_1 \log \alpha - m_1 \log b}
	\leq 15.4 \alpha^{-(n_1-n_2)}.
\]

Now we consider the second and the third solution and obtain from \eqref{eq:trib:equationL2}
\[
	|a \alpha^{n_2}-b^{m_2}|
	=|a\alpha^{n_3}-b^{m_3}+L(0.91\cdot 0.74^{n_3})|
	\leq \max(a \alpha^{n_3}, b^{m_3}).
\]
Dividing by $b^{m_2} \geq 0.13 \alpha^{n_2}$ (by \eqref{eq:trib:balpha})
we obtain 
\[
	|\Lambda_{12}|
	:=\left| \frac{a\alpha^{n_2}}{b^{m_2}}-1 \right|
	\leq \max\left( \frac{a \alpha^{n_3}}{0.13 \alpha^{n_2}}, \frac{b^{m_3}}{b^{m_2}}\right)
	\leq \max(2.6 \alpha^{-(n_2-n_3)}, b^{-(m_2-m_3)}).
\]
Assume for a moment that $|\Lambda_{12}|\geq 0.5$. Then we either get $n_2-n_3\leq 2$ or $b=2$. In the first case we can immediately skip to the next step. In the second case we can immediately skip to Step~\ref{step:Step6}. Thus we may assume $|\Lambda_{12}|<0.5$ and we get that
\[
	|\Lambda_{12}'|
	:= \abs{\log a + n_2 \log \alpha - m_2 \log b}
	\leq \max(5.2 \alpha^{-(n_2-n_3)}, 2b^{-(m_2-m_3)}).
\]
Thus for the linear form $\Lambda_3':= m_2 \Lambda_1' - m_1 \Lambda_{12}'$ we get the upper bound
\begin{align}\label{eq:trib:Lambda3}
	|\Lambda_{3}'|
	&=|(n_1m_2 - n_2m_1)\log \alpha + (m_2-m_1)\log a| \nonumber\\
	&\leq m_2 \cdot 15.4 \alpha^{-(n_1-n_2)} +
		m_1 \cdot \max(5.2 \alpha^{-(n_2-n_3)}, 2b^{-(m_2-m_3)}) \nonumber\\
	&\leq 20.6 n_1 \max( \alpha^{-(n_1-n_2)}, \alpha^{-(n_2-n_3)},b^{-(m_2-m_3)} ).
\end{align} 

Now we have a linear form in only two logarithms, so we can use Laurent's bound instead of Matveev's.

We set
\begin{align*}
\eta_1 &= \alpha, \quad b_1 = n_1m_2 - n_2 m_1,\\
\eta_2 &= a, \quad b_2 = m_2-m_1,\\
D &= 3, \\
\log A_1 &= 1 = \max(3h(\alpha),\log \alpha,1),\\
\log A_2 &= 3.8 \geq \log 44 = 3h(a) = \max(Dh(a),\abs{\log a}, 1),\\
b' &= \frac{|n_1 m_2 - n_2 m_1|}{3.8} + \frac{|m_2-m_1|}{1}
	\leq {n_1^2}/{3.7}.
\end{align*}
We estimate the factor
\[
	\max( \log b' + 0.38, 18/D, 1)
	\leq \max (2 \log n_1 - \log 3.7 + 0.38, 6, 1)
	\leq 2 \log n_1.
\]
In order to apply Proposition \ref{prop:Laurent}, we need to check if $a$ and $\alpha$ are multiplicatively independent, and if $b_1$ and $b_2$ are nonzero.
We have already checked at the beginning of this section that $a$ and $\alpha$ are multiplicatively independent and we are assuming that $n_1>n_2$, which implies $m_1>m_2$, so $b_2=m_2-m_1$ is nonzero. Assume for a moment that $b_1=0$. Then we have
\begin{align*}
	1
	<\abs{\log a}
	\leq |b_2 \log a|
	=|\Lambda_3'|,	
\end{align*}
so
\[
	\log |\Lambda_3'|
	>0,
\]
which is much better than what we will obtain from the application of Proposition~\ref{prop:Laurent}. So let us now apply Proposition \ref{prop:Laurent}. We obtain
\[
	\log |\Lambda_{3}'|
	\geq -C_{103} (\log n_1)^2, 
\]
where
\[
	C_{103}
	= 2778
	> 20.3 \cdot 3^2 \cdot 2^2 \cdot 1 \cdot 3.8.
\]
Together with \eqref{eq:trib:Lambda3} this implies
\begin{multline*}
	- 2778 \cdot (\log n_1)^2 \\
	\leq \log 20.6 + \log n_1 - \min((n_1-n_2)\log \alpha, (n_2-n_3)\log \alpha, (m_2-m_3)\log b) 
\end{multline*}
and we get
\[
	\min((n_1-n_2)\log \alpha, (n_2-n_3)\log \alpha, (m_2-m_3)\log b)
	\leq 2780 \cdot (\log n_1)^2.
\]
If the minimum is realised by $(n_1-n_2)\log \alpha$, then we can immediately skip to Step~\ref{step:Step5}. If it is realised by $(n_2-n_3)\log \alpha$, then we have
\begin{equation}\label{eq:trib:boundn2-n3}
	n_2-n_3 \leq 4563 \cdot (\log n_1)^2
\end{equation}
and go to the next step. If it is realised by $(m_2-m_3)\log b$, we get that $\log b \leq (m_2-m_3)\log b \leq 2780 \cdot (\log n_1)^2$ and we can skip to Step~\ref{step:Step6}.
\end{mystep}

\begin{mystep}\label{step:Step4}
First, recall from the previous step the bound
\[
	|\Lambda_1'|
	= \abs{\log a + n_1 \log \alpha - m_1 \log b}
	\leq 15.4 \alpha^{-(n_1-n_2)}.
\]
Second, we generate a new linear form in logarithms by starting from \eqref{eq:trib:equationL2}:
\[
	|a \alpha^{n_3} (\alpha^{n_2-n_3} - 1) - b^{m_2}| 
	= |b^{m_3} + L(0.91\cdot 0.74^{n_3})|
	\leq b^{m_3} + 0.7.
\]
Dividing by $b^{m_2}$ we obtain
\[	
	|\Lambda_{41}|
	:=\abs{\frac{a \alpha^{n_3} (\alpha^{n_2-n_3} - 1)}{b^{m_2}} - 1}
	\leq b^{-(m_2-m_3)} + 0.7 b^{-m_2}
	\leq 1.7 b^{-(m_2-m_3)}.
\]
If $|\Lambda_{41}|\geq 0.5$, then we immediately get $b\leq 3$ and we can skip to Step~\ref{step:Step6}. Let us assume $|\Lambda_{41}|< 0.5$. Then we obtain
\[
	|\Lambda_{41}'|
	:= \abs{\log a + n_3 \log \alpha + \log (\alpha^{n_2-n_3}-1) - m_2 \log b}
	\leq 3.4 b^{-(m_2-m_3)}.
\]
Hence we have for the linear form $\Lambda_4':= m_2 \Lambda_1' -m_1 \Lambda_{41}'$ the upper bound
\begin{align}\label{eq:trib:Lambda4}
	|\Lambda_4'|
	&= |(m_2-m_1)\log a + (m_2n_1 - m_1n_3) \log \alpha - m_1 \log(\alpha^{n_2-n_3}-1)| \nonumber\\
	&\leq m_2 \cdot 15.4 \alpha^{-(n_1-n_2)} + m_1 \cdot 3.4 b^{-(m_2-m_3)} \nonumber\\
	&\leq 18.8 \cdot n_1 \max( \alpha^{-(n_1-n_2)}, b^{-(m_2-m_3)}).
\end{align}
Now $\Lambda_4'\neq 0$ because the technical condition involving Equation \eqref{eq:techcond} is fulfilled, and we can apply Proposition \ref{prop:Matveev}. We set $t=3$, $K=\QQ(a,\alpha)=\QQ(\alpha)$, $D=3=[K:\QQ]$, as well as
\begin{align*}
	\eta_1&=a, \quad &
	\eta_2&=\alpha, \quad &
	\eta_3&=\alpha^{n_2-n_3}-1,\\
	b_1 &=m_2-m_1, \quad &
	b_2 &=m_2n_1 - m_1n_3, \quad &
	b_3 &= - m_1.
\end{align*}
Further, we can put
\begin{align*}
	A_1&=3.8 \geq \log 44= \max(Dh(a),\abs{\log a}, 0.16),\\
	A_2&=0.7 \geq \log \alpha =\max(Dh(\alpha),\log \alpha, 0.16),\\
	A_3&= 2.7 (n_2-n_3)
		\geq 3 ( (n_2-n_3)h(\alpha)+ \log 2)
		\geq \max(Dh(\eta_3),\abs{\log\eta_3}, 0.16),\\
	B&= n_1^2.
\end{align*}
Thus we get the lower bound
\[
	\log |\Lambda_4'|
	\geq - C_{104} (n_2-n_3) \log n_1,
\]
where
\[
	C_{104}
	= 4.3 \cdot 10^{13}
	> 1.4 \cdot 30^6 \cdot 3^{4.5} \cdot 3^2 \cdot (1+ \log 3) \cdot 2.2 \cdot 3.8 \cdot 0.7 \cdot 2.7.
\]
Note that we used the estimate $1+ \log B = 1+ \log (n_1^2) \leq 2.2 \log n_1$ for $n_1 > 150$. 

Together with \eqref{eq:trib:Lambda4} this implies 
\[
	- C_{104} (n_2-n_3) \log n_1
	\leq \log 18.8 + \log n_1 - \min ((n_1-n_2) \log \alpha, (m_2-m_3)\log b),
\]
from which we get, omitting small terms because $C_{104}$ was roughly estimated,
\[
	\min ((n_1-n_2) \log \alpha, (m_2-m_3)\log b)
	\leq C_{104} (n_2-n_3) \log n_1.
\]
If the minimum is realised by $(n_1-n_2) \log \alpha$, then using \eqref{eq:trib:boundn2-n3} from the previous step we obtain
\begin{align}\label{eq:trib:boundStep5}
	n_1-n_2
	&\leq (\log \alpha)^{-1} \cdot C_{104} (n_2-n_3) \log n_1 \nonumber\\
	& \leq (\log \alpha)^{-1} \cdot C_{104} \cdot \log n_1 \cdot 4563 \cdot (\log n_1)^2 \nonumber\\
	& \leq C_{105} (\log n_1)^3,
\end{align}
with
\[
	C_{105}
	= 3.3\cdot 10^{17}
	> (\log \alpha)^{-1} \cdot C_{104} \cdot 4563.
\]
If the minimum is realised by $(m_2-m_3)\log b$, then in a similar way we obtain 
\begin{align*}
	\log b
	\leq (m_2-m_3)\log b
	\leq C_{104} \cdot \log n_1 \cdot 4563 \cdot (\log n_1)^2
	\leq 2 \cdot 10^{17} \cdot (\log n_1)^3
\end{align*}
and we can skip to Step \ref{step:Step6}.
\end{mystep}

\begin{mystep}\label{step:Step5}
We now use \eqref{eq:trib:nlogalphamlogb} and \eqref{eq:trib:boundStep5} to obtain a bound for $\log b$:
\begin{multline*}
	\log b 
	\leq (m_1-m_2)\log b
	= m_1 \log b- m_2 \log b\\
	\leq n_1 \log \alpha - n_2 \log \alpha + 2.1
	= (n_1-n_2)\log \alpha + 2.1
	\leq C_{105} (\log n_1)^3 \log \alpha,
\end{multline*}
where we omitted the constant $ 2.1 $ because $C_{105}$ came from a rough estimation.
Thus we have
\begin{equation}\label{eq:trib:boundStep6}
	\log b 
	\leq C_{106} (\log n_1)^3,
\end{equation}
with
\[
	C_{106} 
	= 2.1\cdot 10^{17}
	> C_{105} \log \alpha.
\]
\end{mystep}

\begin{mystep}\label{step:Step6}
Finally, we combine \eqref{eq:trib:boundStep2} from Step \ref{step:Step2} with \eqref{eq:trib:boundStep6}:
\begin{align*}
	n_1 \leq C_{102}\cdot (\log n_1 \cdot \log b)^2
	&\leq C_{102}\cdot \left(\log n_1 \cdot C_{106} (\log n_1)^3\right)^2\\
	&\leq C_{107} \cdot (\log n_1)^8,
\end{align*}
with
\[
	C_{107}
	= 3.5 \cdot 10^{62}
	> C_{102} \cdot C_{106}^2 =  7.8 \cdot 10^{27} \cdot \left(2.1 \cdot 10^{17}\right)^2.
\]
Solving the inequality $n_1 \leq 3.5 \cdot 10^{62} (\log n_1)^8$ numerically yields
\[
	n_1 \leq 5 \cdot 10^{80}.
\]
\end{mystep}

We have finally found an explicit upper bound for $n_1$. Next we want to reduce this bound. Since the bound for $b$ is extremely large, we cannot use the linear forms from Steps \ref{step:Step1} and \ref{step:Step2} for the reduction process. Instead, we will do four Reduction Steps A--D corresponding to the Steps 3--6 and reduce the bounds as far as we can.

\begin{redstep}[Step \ref{step:Step3}]\label{step:RedstepA}
Recall from \eqref{eq:trib:Lambda3} that
\begin{multline*}
	|\Lambda_3'|
	=|(n_1m_2 - n_2m_1)\log \alpha - (m_1-m_2)\log a|\\
	\leq 20.6 n_1 \max( \alpha^{-(n_1-n_2)}, \alpha^{-(n_2-n_3)},b^{-(m_2-m_3)} ).
\end{multline*}
Note that $m_1-m_2 < m_1 \leq n_1 \leq 5 \cdot 10^{80}$. 
We compute the continued fraction expansion of $\log a/ \log \alpha$ and find the first convergent $p/q$ such that $q\geq 5 \cdot 10^{80}$.
Then by the best approximation property of continued fractions it turns out that
\[
	4.4 \cdot 10^{-82}
	\leq |p \log \alpha - q \log a|
	\leq |\Lambda_3'|.
\]
This implies
\[
	\max( \alpha^{-(n_1-n_2)}, \alpha^{-(n_2-n_3)},b^{-(m_2-m_3)} )
	\geq 4.4 \cdot 10^{-82} \cdot \frac{1}{20.6 n_1}
	\geq 4.2 \cdot 10^{-164}.
\]

\textit{Case 1:} $\max( \alpha^{-(n_1-n_2)}, \alpha^{-(n_2-n_3)},b^{-(m_2-m_3)} )=\alpha^{-(n_1-n_2)}$. Then we get that
\[
	n_1 - n_2
	\leq - \log (4.2 \cdot 10^{-164}) /\log \alpha
	< 618
\]
and we can skip to Reduction Step \ref{step:RedstepC}.

\textit{Case 2:} $\max( \alpha^{-(n_1-n_2)}, \alpha^{-(n_2-n_3)},b^{-(m_2-m_3)} )=\alpha^{-(n_2-n_3)}$. Then we get
\[
	n_2 - n_3
	< 618
\]
and go to the next step.

\textit{Case 3:} $\max( \alpha^{-(n_1-n_2)}, \alpha^{-(n_2-n_3)},b^{-(m_2-m_3)} )=b^{-(m_2-m_3)}$. Then we get
\[
	\log b
	\leq (m_2-m_3)\log b 
	\leq -\log(4.2 \cdot 10^{-164})
	\leq 377
\]
and we can skip to Reduction Step \ref{step:RedstepD}.
\end{redstep}

\begin{redstep}[Step \ref{step:Step4}]\label{step:RedstepB}
Recall from \eqref{eq:trib:Lambda4} that
\begin{multline*}
	|\Lambda_4'|
	= |(m_2-m_1)\log a + (m_2n_1 - m_1n_3) \log \alpha - m_1 \log(\alpha^{n_2-n_3}-1)|\\
	\leq 18.8 n_1 \max( \alpha^{-(n_1-n_2)}, b^{-(m_2-m_3)})
\end{multline*}
and all coefficients are bounded by $n_1^2\leq (5 \cdot 10^{80})^2\leq 2.5\cdot 10^{161}=:M$.

Now for each $n_2-n_3\in \{1,2,\ldots, 617\}$ we apply the LLL-algorithm to find an absolute lower bound for $|\Lambda_4'|$ as described in Lemma \ref{lem:LLL}.
To obtain the matrices $B$ and $B^*$ we use the matrix attributes \verb|LLL()| and \verb|gram_schmidt()| in Sage \cite{sagemath}.
In each case, we try $C\approx M^3$ and if the algorithm fails (i.e.\ $c^2\leq T^2+S$), we increase $C$ by a factor of 10.
Indeed, in each of the cases the LLL reduction works after at most three tries and we get a lower bound for $|\Lambda_4'|$ in that case.
As an overall lower bound we obtain
\[
	3.7 \cdot 10^{-326}
	\leq |\Lambda_4'|.
\]
This implies
\[
	\max( \alpha^{-(n_1-n_2)}, b^{-(m_2-m_3)})
	\geq 3.7 \cdot 10^{-326} \cdot \frac{1}{18.8 n_1}
	\geq 3.9 \cdot 10^{-408}.
\]

\textit{Case 1:} $\max(\alpha^{-(n_1-n_2)}, b^{-(m_2-m_3)}) = \alpha^{n_1-n_2}$: Then
\[
	n_1-n_2 
	\leq -\log (3.9 \cdot 10^{-408})/\log \alpha
	< 1540
\]
and we go to the next step.

\textit{Case 2:} $\max(\alpha^{-(n_1-n_2)}, b^{-(m_2-m_3)}) = b^{-(m_2-m_3)}$: Then
\[
	\log b
	\leq (m_2-m_3)\log b
	\leq - \log (3.9 \cdot 10^{-408})
	\leq 939
\]
and we can skip to Reduction Step \ref{step:RedstepD}.
\end{redstep}

\begin{redstep}[Step \ref{step:Step5}]\label{step:RedstepC}
Now we can compute a small bound for $\log b$ like in Step \ref{step:Step5}:
\[
	\log b 
	\leq (n_1-n_2)\log \alpha + 2.1
	\leq 1539 \cdot \log \alpha + 2.1
	\leq 940.
\]
\end{redstep}

\begin{redstep}[Step \ref{step:Step6}]\label{step:RedstepD}
From \eqref{eq:trib:boundStep2} we get
\[
	n_1 
	\leq 7.8 \cdot 10^{27} (\log b)^2 (\log n_1)^2
	\leq 7.8 \cdot 10^{27} \cdot 940^2 (\log n_1)^2
	\leq 6.9 \cdot 10^{33} \cdot (\log n_1)^2.
\]
Solving this inequality numerically, we obtain
\[
	n_1 \leq 5.3\cdot 10^{37}.
\]
\end{redstep}

\step{step:repeating}{Repeating the reduction steps:}
With this smaller bound for $n_1$ we can now repeat the Reduction Steps A--D (see Table \ref{table:redSteps}). The Sage code is included in the \nameref{sec:appendix}.

\begin{table}[h]
\caption{Repeating the reduction steps}\label{table:redSteps}
\begin{tabular}{rcccc}
\hline
\multicolumn{1}{l}{}                & \textbf{1\ts{st} round} & \textbf{2\ts{nd} round} & \textbf{3\ts{rd} round} & \textbf{4\ts{th} round} \\ \hline
$n_1\leq \ldots$                    & $5 \cdot 10^{80}$       & $5.3\cdot 10^{37}$      & $1.2\cdot 10^{37}$      & $1.1\cdot 10^{37}$      \\ \hline
\multicolumn{1}{l}{\textbf{Step A}} &                         &                         &                         &                         \\
$n_1-n_2\leq \ldots$                & 617                     & 292                     & 288                     & 288                     \\
$n_2-n_3\leq \ldots$                & 617                     & 292                     & 288                     & 288                     \\
$\log b \leq \ldots$                & 377                     & 179                     & 176                     & 176                     \\ \hline
\multicolumn{1}{l}{\textbf{Step B}} &                         &                         &                         &                         \\
$n_1-n_2\leq \ldots$                & 1539                    & 729                     & 719                     & 715                     \\
$\log b \leq \ldots$                & 939                     & 445                     & 439                     & 437                     \\ \hline
\multicolumn{1}{l}{\textbf{Step C}} &                         &                         &                         &                         \\
$\log b \leq \ldots$                & 940                     & 447                     & 441                     & 438                     \\ \hline
\multicolumn{1}{l}{\textbf{Step D}} &                         &                         &                         &                         \\
$n_1\leq \ldots$                    & $5.3\cdot 10^{37}$      & $1.2\cdot 10^{37}$      & $1.1\cdot 10^{37}$      & $1.1\cdot 10^{37}$      \\ \hline
\end{tabular}
\end{table}

After that, we are not able to reduce the bound significantly any further. 
From the bounds in the table one can see that we have proven the bounds from Theorem~\ref{thm:Tribos}, i.e. overall we have proven that under the assumptions of Theorem~\ref{thm:Tribos} we have
\[
	\log b \leq 438
	\quad \text{and} \quad 
	150<n_1\leq 1.1\cdot 10^{37}.
\]

\section*{Appendix}\label{sec:appendix}

Below we have enclosed the Sage code that was used to determine the \hyperlink{step:smallSols}{small solutions} to $T_{n_1}-T_{n_2}=b^{m_1}-b^{m_2}$ in Section \ref{sec:Tribos}, as well as the code that was used to determine the bounds for the reduction rounds in Table \ref{table:redSteps}. 

\begin{lstlisting}[language=Python]
# small solutions

# start computing Tribonacci numbers T_n1
t1_veryold = 0; t1_old = 0; t1 = 1 # starting values
n1 = 1
while n1 < 150:
    # compute next Tribonacci number T_n1
    temp = t1 + t1_old + t1_veryold 
    t1_veryold = t1_old
    t1_old = t1
    t1 = temp
    n1 = n1 + 1
    # start computing Tribonacci numbers T_n2 < T_n1
    t2_veryold = 0; t2_old = 0; t2 = 1 # starting values
    n2 = 1
    while t2 < t1_old: # because we increase at the beginning
        # compute next Tribonacci number T_n2
        temp = t2 + t2_old + t2_veryold
        t2_veryold = t2_old
        t2_old = t2
        t2 = temp
        n2 = n2 + 1
        # check representation
        diff = t1 - t2
        primefactorisation = list(factor(diff))
        factors_for_b = list(Combinations(primefactorisation))
        del factors_for_b[0] # exclude b = 1
        for factors in factors_for_b:
            x = gcd([k for [p,k] in factors])
            b = prod(p^(k/x) for [p,k] in factors)
            y = round(log(1 + diff/(b^x))/log(b))
            if b^x * (b^y - 1) == diff:
                m2 = x
                m1 = x + y
                c = t2 - b^x
                print("T_{",n1,"} - T_{",n2,"} 
                      =",b,"^{",m2,"} (",b,"^{",y,"}-1), \\quad 
                      c&=", c, "= T_{",n1,"} -",b,"^{",m1,"} 
                      = T_{",n2,"} -",b,"^{",m2,"}; \\\\")  
\end{lstlisting}

\begin{lstlisting}[language=Python]
# reduction steps

alpha = n(solve(x^3 - x^2 - x - 1 == 0, x, 
          solution_dict=True)[2][x], digits=2000)
a = 1/(-alpha^2 + 4*alpha - 1)

#################################################################
print("Step A:")

bound_n1 = 5.3*10^37  # replace for each round

c = continued_fraction(log(a)/log(alpha))
i = 1
while c.denominator(i) < bound_n1:
    i = i + 1
    
p = c.numerator(i)
q = c.denominator(i)

lowerbound_linform = abs(p*log(alpha) - q*log(a))

lowerbound = lowerbound_linform/(20.6*bound_n1) 

n2n3max = floor(-log(lowerbound)/log(alpha))
print("bound n_1 - n_2 and n_2 - n_3:", n2n3max)
print("bound log(b):",  -log(lowerbound))

#################################################################
print("Step B:")

M = bound_n1^2

C0 = 10^int(3*log(M)/log(10)) # approx. M^3 but with full precision

lowerBound = 1

for n2n3 in range(1, n2n3max+1):
	# loop in order to find a C that works    
    C = C0
    Cmax = C0*10^30
    done = False
    while not done:
        C = C*10
    
        A = Matrix([[1, 0, round(C*log(a))],
                  [0, 1, round(C*log(alpha))],
                  [0, 0, round(C*log(alpha^n2n3 - 1))]])
        B = A.LLL()
        Bstar, mu = B.gram_schmidt()

        c = min([norm(N(b)) for b in Bstar])
        S = 2*M^2
        T = (1 + 3*M)/2

        if c^2 > T^2 + S:
            lowerBound = min(lowerBound, 1/C * (sqrt(c^2 - S) - T))
            done = True
        elif C == Cmax:
            print('did not work for n2n3 =', n2n3)
            break           
            
lowerBound2 = lowerBound/(18.8*bound_n1)
n1n2max = floor(-log(lowerBound2)/log(alpha))
print("bound n_1 - n_2:", n1n2max)
print("bound log b:", -log(lowerBound2))

#################################################################
print("Step C:")

logbmax = n1n2max*log(alpha) + 2.1
print("bound log b:", logbmax.n())

#################################################################
print("Step D:")

logbmax = max(logbmax, -log(lowerBound2))
newbound_n1 = find_root(x - 7.8*10^27*logbmax^2 * log(x)^2, 
                        7.8*10^27*logbmax^2, (7.8*10^27*logbmax^2)^2)
print("new bound for n_1:", newbound_n1)
\end{lstlisting}

\bibliographystyle{habbrv}
\bibliography{lit_Sebastian}

\begin{thebibliography}{10}
\expandafter\ifx\csname url\endcsname\relax
  \def\url#1{\texttt{#1}}\fi
\expandafter\ifx\csname doi\endcsname\relax
  \def\doi#1{\burlalt{doi:#1}{http://dx.doi.org/#1}}\fi
\expandafter\ifx\csname urlprefix\endcsname\relax\def\urlprefix{URL: }\fi
\expandafter\ifx\csname href\endcsname\relax
  \def\href#1#2{#2}\fi
\expandafter\ifx\csname burlalt\endcsname\relax
  \def\burlalt#1#2{\href{#2}{#1}}\fi

\bibitem{bawu93}
A.~Baker and G.~W\"{u}stholz.
\newblock Logarithmic forms and group varieties.
\newblock {\em J. Reine Angew. Math.}, 442:19--62, 1993.
\newblock \doi{10.1515/crll.1993.442.19}.

\bibitem{BatteEtAl2022}
H.~Batte, M.~Ddamulira, J.~Kasozi, and F.~Luca.
\newblock On the multiplicity in {P}illai's problem with {F}ibonacci numbers
  and powers of a fixed prime.
\newblock \href{https://arxiv.org/abs/2207.12868}{arXiv:2207.12868}.

\bibitem{Bennett2001}
M.~A. Bennett.
\newblock On some exponential equations of {S}. {S}. {P}illai.
\newblock {\em Canad. J. Math.}, 53(5):897--922, 2001.
\newblock \doi{10.4153/CJM-2001-036-6}.

\bibitem{BravoDiazGomez2021}
J.~J. Bravo, M.~D\'{\i}az, and C.~A. G\'{o}mez.
\newblock Pillai's problem with {$k$}-{F}ibonacci and {P}ell numbers.
\newblock {\em J. Difference Equ. Appl.}, 27(10):1434--1455, 2021.
\newblock \doi{10.1080/10236198.2021.1990900}.

\bibitem{BravoLucaYazan2017}
J.~J. Bravo, F.~Luca, and K.~Yaz\'{a}n.
\newblock On {P}illai's problem with {T}ribonacci numbers and powers of 2.
\newblock {\em Bull. Korean Math. Soc.}, 54(3):1069--1080, 2017.
\newblock \doi{10.4134/BKMS.b160486}.

\bibitem{ChimPinkZiegler2017}
K.~C. Chim, I.~Pink, and V.~Ziegler.
\newblock On a variant of {P}illai's problem.
\newblock {\em Int. J. Number Theory}, 13(7):1711--1727, 2017.
\newblock \doi{10.1142/S1793042117500981}.

\bibitem{ChimPinkZiegler2018}
K.~C. Chim, I.~Pink, and V.~Ziegler.
\newblock On a variant of {P}illai's problem {II}.
\newblock {\em J. Number Theory}, 183:269--290, 2018.
\newblock \doi{10.1016/j.jnt.2017.07.016}.

\bibitem{Ddamulira2019}
M.~Ddamulira.
\newblock On the problem of {P}illai with {P}adovan numbers and powers of 3.
\newblock {\em Studia Sci. Math. Hungar.}, 56(3):364--379, 2019.
\newblock \doi{10.1556/012.2019.56.3.1435}.

\bibitem{Ddamulira2019Tribos}
M.~Ddamulira.
\newblock On the problem of {P}illai with tribonacci numbers and powers of 3.
\newblock {\em J. Integer Seq.}, 22(5):Art. 19.5.6, 14, 2019.
\newblock
  \urlprefix\url{https://www.emis.de/journals/JIS/VOL22/Ddamulira/dda3.pdf}.

\bibitem{Ddamulira2020}
M.~Ddamulira.
\newblock On a problem of {P}illai with {F}ibonacci numbers and powers of 3.
\newblock {\em Bol. Soc. Mat. Mex. (3)}, 26(2):263--277, 2020.
\newblock \doi{10.1007/s40590-019-00263-1}.

\bibitem{DdamuliraGomezCarlosLuca2018}
M.~Ddamulira, C.~A. G\'{o}mez, and F.~Luca.
\newblock On a problem of {P}illai with {$k$}-generalized {F}ibonacci numbers
  and powers of 2.
\newblock {\em Monatsh. Math.}, 187(4):635--664, 2018.
\newblock \doi{10.1007/s00605-018-1155-1}.

\bibitem{DdamuliraLuca2020}
M.~Ddamulira and F.~Luca.
\newblock On the problem of {P}illai with {$k$}-generalized {F}ibonacci numbers
  and powers of 3.
\newblock {\em Int. J. Number Theory}, 16(7):1643--1666, 2020.
\newblock \doi{10.1142/S1793042120500876}.

\bibitem{DdamuliraLucaRakotomalala2017}
M.~Ddamulira, F.~Luca, and M.~Rakotomalala.
\newblock On a problem of {P}illai with {F}ibonacci numbers and powers of 2.
\newblock {\em Proc. Indian Acad. Sci. Math. Sci.}, 127(3):411--421, 2017.
\newblock \doi{10.1007/s12044-017-0338-3}.

\bibitem{ErazoGomezLuca2021}
H.~S. Erazo, C.~A. G\'{o}mez, and F.~Luca.
\newblock On {P}illai's problem with {$X$}-coordinates of {P}ell equations and
  powers of 2 {II}.
\newblock {\em Int. J. Number Theory}, 17(10):2251--2277, 2021.
\newblock \doi{10.1142/S1793042121500871}.

\bibitem{HernandezLucaRivera2019}
S.~H. Hern\'{a}ndez, F.~Luca, and L.~M. Rivera.
\newblock On {P}illai's problem with the {F}ibonacci and {P}ell sequences.
\newblock {\em Bol. Soc. Mat. Mex. (3)}, 25(3):495--507, 2019.
\newblock \doi{10.1007/s40590-018-0223-9}.

\bibitem{HernaneLucaRihaneTogbe2018}
M.~O. Hernane, F.~Luca, S.~E. Rihane, and A.~Togb\'{e}.
\newblock On {P}illai's problem with {P}ell numbers and powers of 2.
\newblock {\em Hardy-Ramanujan J.}, 41:22--31, 2018.
\newblock \urlprefix\url{https://hrj.episciences.org/5102/pdf}.

\bibitem{Laurent2008}
M.~Laurent.
\newblock Linear forms in two logarithms and interpolation determinants. {II}.
\newblock {\em Acta Arith.}, 133(4):325--348, 2008.
\newblock \doi{10.4064/aa133-4-3}.

\bibitem{LomeliHernandezLuca2019}
A.~C.~G. Lomel\'{\i}, S.~H. Hern\'{a}ndez, and F.~Luca.
\newblock Pillai's problem with the {F}ibonacci and {P}adovan sequences.
\newblock {\em Ann. Math. Inform.}, 50:101--115, 2019.
\newblock \doi{10.33039/ami.2019.09.001}.

\bibitem{LomeliHernandez2019}
A.~C.~G. Lomel\'{\i} and S.~Hern\'{a}ndez~Hern\'{a}ndez.
\newblock Pillai's problem with {P}adovan numbers and powers of two.
\newblock {\em Rev. Colombiana Mat.}, 53(1):1--14, 2019.
\newblock
  \urlprefix\url{http://www.scielo.org.co/pdf/rcm/v53n1/0034-7426-rcm-53-01-1.pdf}.

\bibitem{LomeliHernandezLuca2019Indian}
A.~C.~G. Lomel\'{\i}, S.~Hern\'{a}ndez~Hern\'{a}ndez, and F.~Luca.
\newblock Pillai's problem with the {P}adovan and tribonacci sequences.
\newblock {\em Indian J. Math.}, 61(1):61--75, 2019.

\bibitem{Matveev2000}
E.~M. Matveev.
\newblock An explicit lower bound for a homogeneous rational linear form in the
  logarithms of algebraic numbers. {II}.
\newblock {\em Izv. Math.}, 64(6):1217--1269, 2000.
\newblock \doi{10.1070/im2000v064n06abeh000314}.

\bibitem{Mignotte:kit}
M.~Mignotte.
\newblock A kit on linear forms in three logarithms.
\newblock preprint.

\bibitem{Pillai1936}
S.~S. Pillai.
\newblock On $a^x-b^y=c$.
\newblock {\em Indian Math. Soc.}, 2:119--122, 1936.

\bibitem{Pillai1937}
S.~S. Pillai.
\newblock A correction to the paper ``{O}n $a^x-b^y=c$''.
\newblock {\em Indian Math. Soc.}, 2:215, 1937.

\bibitem{Schinzel1974}
A.~Schinzel.
\newblock Primitive divisors of the expression {$A^{n}-B^{n}$} in algebraic
  number fields.
\newblock {\em J. Reine Angew. Math.}, 268(269):27--33, 1974.
\newblock \doi{10.1515/crll.1974.268-269.27}.

\bibitem{Smart1998}
N.~P. Smart.
\newblock {\em {The Algorithmic Resolution of Diophantine Equations}}.
\newblock London Mathematical Society Studen Texts 41. Cambridge University
  Press, 1998.
\newblock \doi{10.1017/CBO9781107359994}.

\bibitem{sagemath}
{The Sage Developers}.
\newblock {\em {S}ageMath, the {S}age {M}athematics {S}oftware {S}ystem
  ({V}ersion 9.2)}, 2021.
\newblock \urlprefix\url{https://www.sagemath.org}.

\bibitem{wu88}
G.~W\"{u}stholz.
\newblock A new approach to {B}aker's theorem on linear forms in logarithms.
  {III}.
\newblock In {\em New advances in transcendence theory ({D}urham, 1986)}, pages
  399--410. Cambridge Univ. Press, Cambridge, 1988.

\bibitem{Zannier2009}
U.~Zannier.
\newblock {\em Lecture Notes on Diophantine Analysis}.
\newblock Scuola Normale Superiore, 2009.

\end{thebibliography}
	
\end{document}